\documentclass[preprint]{elsarticle}

\usepackage[english]{babel}
\usepackage[T1]{fontenc}
\usepackage[utf8]{inputenc}
\usepackage{lmodern}

\usepackage{amsmath,amssymb,amsfonts,amsthm}
\usepackage{mathtools,commath}
\usepackage{siunitx}
\usepackage{overpic}
\usepackage{graphicx}
\usepackage[outline]{contour}
\contourlength{1pt}
\usepackage[draft]{fixme}
\fxsetup{envlayout=color,targetlayout=color}
\fxusetheme{colorsig}

\newcommand{\SE}[1][3]{\operatorname{SE}(#1)}
\newcommand{\D}{\mathbb{D}}
\newcommand{\bH}{\mathbb{H}}
\newcommand{\qi}{\mathbf{i}}
\newcommand{\qj}{\mathbf{j}}
\newcommand{\qk}{\mathbf{k}}
\newcommand{\eps}{\varepsilon}
\newcommand{\cj}[1]{#1^\star}
\newcommand{\crossM}[1]{[#1]_\times}
\newcommand{\ej}[1]{#1_\eps}
\newcommand{\cay}[1]{C(#1)}

\newcommand{\vv}[1]{#1}
\newcommand{\vvi}{\vv{i}}
\newcommand{\vvj}{\vv{j}}
\newcommand{\vvk}{\vv{k}}
\newcommand{\vvl}{\vv{l}}

\newcommand{\Ng}{f^-}
\newcommand{\Nh}{f^+}
\newcommand{\NTg}{\tilde{f}^-}
\newcommand{\NTh}{\tilde{f}^+}

\newcommand{\quadric}[1]{\mathcal{#1}}
\newcommand{\QQ}{\quadric{Q}}
\newcommand{\SQ}{\quadric{S}}

\newtheorem{proposition}{Proposition}
\newtheorem{theorem}{Theorem}
\theoremstyle{definition}
\newtheorem{definition}{Definition}
\theoremstyle{remark}
\newtheorem{example}{Example}
\newtheorem{remark}{Remark}
\newtheorem*{remark*}{Remark}

\usepackage{hyperref}
\hypersetup{
  pdfauthor={Daniel Huczala, Martin Pfurner, Hans-Peter Schröcker, Gudrun Szewieczek},%
  pdftitle={Bistable Quad-Nets Composed of Four-Bar Linkages},%
  pdfborder={0 0 0},%
  colorlinks=true,%
}

\begin{document}

\begin{frontmatter}
\journal{}

\title{Bistable Quad-Nets Composed of Four-Bar Linkages}

\author[1]{Gudrun Szewieczek}
\ead{gudrun.szewieczek@uibk.ac.at}

\author[2]{Daniel Huczala}
\ead{daniel.huczala@snu.ac.kr}

\author[1]{Martin Pfurner}
\ead{martin.pfurner@uibk.ac.at}

\author[1]{Hans-Peter Schröcker}
\ead{hans-peter.schroecker@uibk.ac.at}

\address[1]{University of Innsbruck, Department of Basic Sciences in Engineering Sciences, Innsbruck, Austria}
\address[2]{Seoul National University, Robotics Laboratory, Seoul, Korea}

\begin{abstract}
  We study a novel type of mechanical structures, composed of spatial four-bar linkages, that are bistable, that is, they allow for two distinct configurations. These structures have an interpretation as quad nets in the Study quadric which we use to prove existence of assemblies with an unbounded number of links and joints. We propose a purely geometric construction of such objects, starting from infinitesimally flexible quad nets in Euclidean space and applying Whiteley de-averaging. This point of view situates the problem within the broader framework of discrete differential geometry and enables the construction of bistable structures from well-known classes of quad nets, such as discrete minimal surfaces. In contrast to many other construction methods for bistable structures, our approach does not rely on numerical optimization and it allows for simple control of relevant geometric parameters such as axis positions and snap angles.
\end{abstract}

\begin{keyword}
  four-bar linkage \sep
  bistable structure \sep
  quadric net \sep
  infinitesimal flexibility \sep
  discrete surface \sep
  discrete Koenigs net
  \MSC[2020]{%
    52C25 
    70B15 
    53A17 
  }
\end{keyword}

\end{frontmatter}

\section{Introduction}
\label{sec:introduction}

\subsection{Background} Bistable or multistable structures, sometimes also called ``snapping'', are an important topic of theoretical and applied research for at least three decades, often with a distinct interdisciplinary flavor that connects mechanism science, robotics, material science, bionics, physics, mathematics, computer science, etc. We do not even attempt to provide a full literature overview here but instead refer to the recent survey paper \cite{zhang24}. There, bistable structures are defined by the existence of ``two distinct stable states'' that require a ``minimal energy input'' to effect a ``snap-through from the `natural' (initial) state to the `everted' state''.

\subsection{Motivation and Related Work}

In this text we propose a novel bistable structure, \emph{snapping four-bar nets.} These are composed of four-bars that are connected by rigid links according to the combinatorics of a quad net with revolute joints sitting on edges. In the terminology of mechanism science, snapping four-bar nets are multi-looped kinematic chains with (rigid) revolute joints. They have zero degrees of freedom but their configuration space consists of \emph{precisely two points.} It is possible to assemble them in either of two distinct configurations but any transition between these two states is impossible in theory. Practically though, if the two configurations are ``close'' enough, due to material flexibility and joints imperfection, a snap-through can be induced by an energy input. Importantly, the transition requires deformation of the bodies, i.e. it is not a rigid-body motion. Classic techniques of kinematics can be applied to determine the two stable configurations but are insufficient to model the snap-through.

Snapping mechanisms offer several practical benefits that make them attractive across engineering domains. The core advantage lies in mechanical simplicity: they transition between states quickly, often requiring only small trigger forces, and once snapped, they hold their position without any continuous energy input. This passive stability is particularly valuable in deployable structures, even in architecture such as foldable bridges \cite{Maleczek_2019} or reconfigurable robotics (reviewed in \cite{cao21}). The integration with simple binary actuators like pneumatic valves can make applications straightforward and cost-effective.

In contrast to many state-of-the-art examples, snapping four-bar nets are truly spatial, exhibit no obvious symmetries and allow for clear geometric constructions that do not require optimization. While four-bar mechanisms (at least flexible ones) and structures composed from them are classical objects of study in mechanism science \cite{chen08,gallet16,li18} or \cite[Chapter~5]{you11}, these novel structures do not fit exactly into the classification given in \cite[Section~2]{zhang24} which names structures based on bars and beams, curved shells, folding techniques based on origami, and composite materials. Our snapping four-bar nets are reminiscent of some origami structures and also related to ``bistable scissor structures'' \cite{arnouts20,arnouts20b} which are subsumed in the category of non-traditional designs in \cite{zhang24}. Both, origami and scissor structures can be modeled as linkages in the sense of mechanism science: They are composed of rigid bodies and revolute axes that can be grouped into \emph{spherical} or \emph{planar} linkages. In contrast, the snapping four-bar nets we propose allow for truly spatial axes arrangements that, when reduced to stripes in the quad net, produce the typical combinatorics of scissor linkages. In the flexible case, these have been used for the construction of Kempe linkages to planar and spatial rational curves \cite{gallet16,li18}.

With this in mind, let us summarize and emphasize two important aspects that distinguish snapping four-bar nets from most bistable structures known in literature:
\begin{itemize}
\item Snapping four-bar nets allow for \emph{exactly two configurations} in a \emph{precise mathematical sense.} Their calculation requires only simple geometric constructions or algebraic computations. No energy optimization is involved but the construction leaves plenty degrees of freedom to optimize for secondary goals.
\item As a consequence of this exact geometric/algebraic construction, no energy is stored in either of the two configurations of the snapping four-bar net. A snap will only occur if energy is induced into the system. This is not typically the case for the structures discussed in the survey article \cite{cao21}. We view it is a neutral property that may be advantageous or not, depending on the particular application.
\end{itemize}

\subsection{Main Results and Outline}

The basic building blocks for snapping four-bar nets are \emph{snapping four-bar mechanisms.} These have been introduced in \cite{wunderlich71} and received some recent attention, either as simple examples and test cases for numeric ``snappability'' questions \cite{nawratil20,nawratil22,zhou24} or -- in a rather specific situation -- because of their relevance to Henrici's flexible hyperboloid model \cite{stachel25}. We review some of their properties in Section~\ref{sec:four-bars}, address known construction methods, and explain the difficulties to apply them to larger structures.

Our main theoretical contribution is existence of snapping four-bar nets, based on a kinematic interpretation of snapping four-bars: The two configurations can be thought of as fixed and as moving axode of a ``discrete Ribaucour motion'', a natural discretization of smooth Ribaucour motions that are characterized by having zero instantaneous pitch \cite{selig16}. In this way, snapping four-bars are related to rotation quadrilaterals \cite{schroecker10,schroecker12} and snapping four-bar nets appear as quad nets in the Study quadric \cite[Section~11]{selig05}. The existence of these quadric nets is guaranteed for net dimension $d \le 6$ and also ensures existence of corresponding snapping four-bar nets. This we describe in Section~\ref{sec:snapping-nets}.

While this resolves theoretical issues related to the existence of snapping four-bar nets, practical examples are still difficult to realize as there is but little control over the position of individual revolute axes and over snap angles. We address this in Section~\ref{sec:surface-design} for the case $d = 2$, again by methods from discrete differential geometry. Starting from an infinitesimally flexible quad net, a well-studied object in discrete differential geometry, we use Whiteley de-averaging \cite{Whiteley2004} to obtain a pair of discrete quad surfaces with congruent corresponding faces. It gives rise to a discrete Ribaucour motion of dimension two whose discrete moving and fixed axodes produce the two configurations of a snapping four-bar net with a \emph{surface-like appearance.} This approach is quite simple and straightforward but may seem rather particular. We prove that this is not the case and, in fact, a large class of snapping four-bar nets can be obtained in that way.

\section{Snapping Four-Bars}
\label{sec:four-bars}

A \emph{four-bar linkage} (or \emph{four-bar,} for short) is a mechanical structure composed of four revolute joints $R_0$, $R_1$, $R_2$, $R_3$ which are arranged in cyclic order and such that any two consecutive axes are connected by a rigid link. By the mobility criterion of Grübler-Kutzbach-Chebychev \cite[Chapter~13]{hunt78}, a spatial four-bar with general design is expected to have a mobility of $-2$. This means that, generically, it cannot be assembled if the relative positions of consecutive revolute axes are prescribed. However, there exist special axes configurations that allow for one, two, or even infinitely many configurations. The interpretation of 2R chains in the Study quadric model of space kinematics as intersection of the Study quadric $\SQ$ with a sub-space of dimension three in projective space $\mathbb{P}^7$ in \cite{brunnthaler05} can be used to show that these are, indeed, all possible cases. The closed 4R loop is obtained as intersection of two projective three spaces with the Study quadric~$\SQ$.

The case of only one possible configuration is either \emph{rigid} or \emph{infinitesimally flexible (shaky),} the latter if and only if the axes are taken from one family of rulings on a hyperboloid, c.f. \cite{wunderlich71} or \cite[Section~4.2]{stachel25}. The case of infinitely many configurations yields a \emph{mobile} linkage. It is either planar (all axes are parallel), spherical (all axes are concurrent), or a Bennett linkage \cite[Section~11.4.1]{mccarthy11}. The case we are interested in is that of two configurations. The corresponding four-bars are called \emph{snapping} or \emph{bistable.} Note that we count mobile and shaky four-bars as special cases among the snapping ones, similar to~\cite{stachel25}.

\subsection{Constructions of Snapping Four-Bars}
\label{sec:construction}

Literature knows several constructions of snapping four-bars and we briefly review them here. It turns out that they are unsuitable for the generation of larger structure because of little control on the position of axes. The original construction of Wunderlich \cite{wunderlich71} designates one link as fixed. It contains two fixed revolute axes, say $R_0$ and $R_3$. Wunderlich then prescribes two different configurations $(R'_1,R'_2)$, $(R''_1,R''_2)$ of the remaining (moving) axes $R_1$ and $R_2$ and constructs $R_0$ and $R_3$ such that $R'_1$ and $R''_1$ correspond in a rotation (or a translation in special cases) around $R_0$ and $R'_2$ and $R''_2$ correspond in a rotation around~$R_3$.

Wunderlich's construction uses the obvious observation that two incongruent configurations of a four-bar can be placed such that the axes $R_3$ and $R_0$ coincide. This is not only true for $R_3$ and $R_0$ but for any pair of consecutive axes and we will make use of this in the forthcoming Section~\ref{sec:rotation-quadrilaterals} which relates snapping four-bars to rotation quadrilaterals in the terminology of \cite{schroecker12}. This connection gives rise to several construction methods of snapping four-bars. Some of them are described in \cite{schroecker10,schroecker12}, for example the non-unique decomposition of a helical displacement into two rotations which can be inferred easily from \cite[Figure~10.7]{mccarthy11}.

A common feature of all of these constructions is the difficulty to control axis positions. This is particularly disturbing in the design of larger snapping structures. \cite[Section~4]{stachel25} alleviates these problems to a certain extend as it allows to prescribe two isometric quadrilaterals whose vertices contain the axes of two incongruent realizations. Still, it is difficult to ensure that closure conditions in snapping quad nets are fulfilled.

In the next section we build on the kinematic interpretation of snapping four-bars as rotation quadrilaterals and we extend it. We present yet another construction of rotation quadrilaterals in terms of dual quaternions and the Study quadric model of space kinematics with the main purpose to show \emph{existence} of arbitrarily large snapping structures composed of four-bars. Later, in Section~\ref{sec:surface-design}, we will use this interpretation to construct snapping four-bar nets directly from discrete surface-like structures.

\subsection{Snapping Four-Bars and Rotation Quadrilaterals}
\label{sec:rotation-quadrilaterals}

Consider a snapping four-bar with two incongruent configurations, given by axes quadruples $(R_0,R_1,R_2,R_3)$ and $(R'_0,R'_1,R'_2,R'_3)$, respectively. We additionally set $R_4 \coloneqq R_0$ and $R'_4 \coloneqq R'_0$. The angle and distance between any two consecutive axes pairs $(R_i,R_{i+1})$ and $(R'_i,R'_{i+1})$ are equal for any $i \in \{0,1,2,3\}$. This means that
\begin{itemize}
\item the second configuration can be rigidly transformed to a pose were $R_i$ and $R_{i+1}$ coincide with $R'_i$ and $R'_{i+1}$, respectively, and
\item any consecutive two of these poses correspond in a rotation $\varrho_i$ around their common axis $R_i$.
\end{itemize}
This is illustrated in Figure~\ref{fig:rotations}.

\begin{figure}
  \centering
  \includegraphics{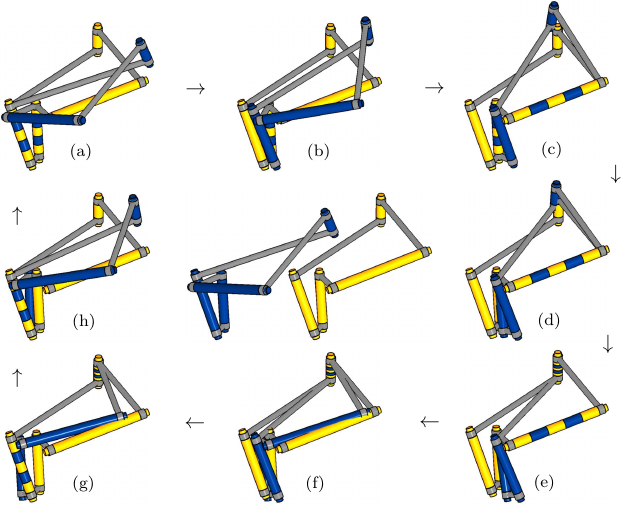}
  \caption{Two configurations (orange and blue) of a snapping four-bar (\emph{center}), in poses where two axes are aligned (a, c, e, g) and in intermediate poses (b, d, f, h). Coinciding axes are indicated by using both colors.}
  \label{fig:rotations}
\end{figure}

We thus see that a snapping four-bar gives rise to four relative rotations $\varrho_0$, $\varrho_1$, $\varrho_2$, $\varrho_3$ whose composition is the identity (c.f.\cite[Theorem 7]{stachel25}):
\begin{equation}
  \label{eq:rotations-identity}
  \varrho_3 \circ \varrho_2 \circ \varrho_1 \circ \varrho_0 = \text{id.}
\end{equation}
In \cite{schroecker10,schroecker12}, this sequence of rotations was called a \emph{rotation quadrilateral.} The two configurations of the snapping four-bar can be recovered from any rotation quadrilateral as ``discrete fixed and moving axode'', respectively. The important insight to be gained at this point is that \emph{snapping four-bars and rotation quadrilaterals mutually determine each other.} This statement has to be taken with a grain of salt though as snapping four-bars occur generically but infinitesimally or continuously flexible four-bars are possible as well. Also note that limiting cases may include translations instead of rotations. These give rise to bistable structures composed of revolute and prismatic joints.

\subsection{Rotation Quadrilaterals and Dual Quaternions}

Rotation quadrilaterals have an elegant interpretation in the dual quaternion model of space kinematics. We briefly describe it here and provide a numeric example. The dual quaternion interpretation will be used in Section~\ref{sec:quadric-nets} to show existence of arbitrarily large snapping structures composed of four-bars.

At first, a very brief introduction to dual quaternions and space kinematics is necessary. A more in-depth treatment can be found for example in \cite[Section~9.3 and Chapter~11]{selig16}. A dual quaternion~$p$ can be seen as a quaternion over the ring of dual numbers or, equivalently, as $p = a + \eps c$ where both $a$ and $c$ are ordinary quaternions and the new symbol $\eps \neq 0$ commutes with everything and satisfies $\eps^2 = 0$. We denote the set of quaternions by $\bH$, the set of dual quaternions by $\D\bH$, and we extend quaternion conjugation to $\D\bH$ as $\cj{p} \coloneqq \cj{a} + \eps \cj{c}$ where $\cj{a}$ and $\cj{c}$ are the usual conjugates of $a$ and $c$ as quaternions. We also define the $\eps$-conjugate of $p$ as $p_\eps \coloneqq a - \eps c$.

With this, $p\cj{p} = a\cj{a} + \eps(a\cj{c}+c\cj{a})$ is a dual number. Dual quaternions $p = a + \eps c$ satisfying $p\cj{p} \in \mathbb{R}$ or, equivalently,
\begin{equation}
  \label{eq:study-condition}
  a\cj{c} + c\cj{a} = 0
\end{equation}
are sometimes called \emph{Study quaternions} and \eqref{eq:study-condition} is called the \emph{Study condition.} Study quaternions constitute a well-known model for $\SE$. Identifying the point $(x_1,x_2,x_3) \in \mathbb{R}^3$ with the dual quaternion $x = 1 + \eps(x_1\qi + x_2\qj +x_3\qk)$, the action
\[
  \mathbb{R}^3 \to \mathbb{R}^3,\quad
  x \mapsto \frac{p_\eps x \cj{p}}{p\cj{p}}
\]
is well-defined unless $a = 0$ and describes a rigid body displacement. Clearly, non-zero scalar multiples of $p$ yield the same displacement. This provides a map from $\SE$ into the projective space $\mathbb{P}(\D\bH) \cong \mathbb{P}^7(\mathbb{R})$ over the dual quaternions. The image of $\SE$ is precisely the \emph{Study quadric $\SQ$} given by \eqref{eq:study-condition} minus the projective space given by $a = 0$. The two most relevant properties of Study's kinematic map for us are:
\begin{enumerate}
\item The composition of rigid body displacements corresponds to dual quaternion multiplication.
\item The relative displacement between two invertible Study quaternions $p$, $q \in \D\bH$ is a rotation or a translation if and only if the line joining them (when viewed as points in the Study quadric) is contained in the Study quadric~$\SQ$. The case of a straight line in $\SQ$ corresponding to a translation is non generic and, for reasons of simplicity, we ignore it in this text.
\end{enumerate}

\begin{example}
  We illustrate the construction of snapping four-bars from rotation quadrilaterals in an example. Consider the four dual quaternions
  \[
    p_0 = 1,\quad
    p_1 = 1 + \qk - \eps(\qi + 2\qj),\quad
    p_2 = 1 - \qj + 2\qk - \eps(3+\qj-\qk),\quad
    p_3 = 1 - \qj + \eps(\qi + \qk).
  \]
  They satisfy
  \[
    \{p_0\cj{p}_0, p_1\cj{p}_1, p_2\cj{p}_2, p_3\cj{p}_3\} \subset \mathbb{R}
  \]
  as well as
  \[
    \{p_0\cj{p_1} + p_1\cj{p_0},\
      p_1\cj{p_2} + p_2\cj{p_1},\
      p_2\cj{p_3} + p_3\cj{p_2},\
      p_3\cj{p_0} + p_0\cj{p_3}\} \subset \mathbb{R}.
  \]
  The former is the Study condition \eqref{eq:study-condition}, the later is the condition that the relative displacements
  \begin{equation}
    \label{eq:relative-rotations}
    r_0 \coloneqq p_1\cj{p}_0,\quad
    r_1 \coloneqq p_2\cj{p}_1,\quad
    r_2 \coloneqq p_3\cj{p}_2,\quad
    r_3 \coloneqq p_0\cj{p}_3
  \end{equation}
  are rotations. This is also confirmed by the explicit expressions
  \begin{alignat*}{2}
    r_0 &= 1 + \qk - \eps(\qi+2\qj),&\quad
    r_1 &= 3 + \qi - \qj + \qk - \eps(2\qi-3\qj-5\qk),\\
    r_2 &= 2(1+\qi-\qk)+\eps(\qi+6\qj+\qk),&\quad
    r_3 &= 1 + \qj - \eps(\qi+\qk),
  \end{alignat*}
  combined with the remark that the Study quaternion $p$ describes a rotation if and only if its dual part $c$ satisfies $c + c^* = 0$ \cite[Section~11.2.1]{selig05}. By construction
  \[
    r_3 r_2 r_1 r_0 = 24
  \]
  is real so that \eqref{eq:rotations-identity} is indeed fulfilled. The Plücker coordinates of the revolute axes in the fixed space are the respective vector parts $\frac{1}{2}(r_i-r^*_i)$ of $r_0$, $r_1$, $r_2$, and $r_3$ \cite{radavelli14}:
  \begin{alignat*}{2}
    R_0 &= \qk - \eps (\qi+2\qj),&\quad
    R_1 &= \qi - \qj + \qk - \eps(2\qi-3\qj-5\qk),\\
    R_2 &= 2(\qi-\qk) + \eps(\qi+6\qj+\qk),&\quad
    R_3 &= \qj - \eps(\qi+\qk).
  \end{alignat*}
  They determine the Denavit-Hartenberg parameters $a_i$ (distances), $d_i$ (offsets), and $\alpha_i$ (angles) as follows:
  \begin{equation}
    \label{eq:dh-parameters}
    \begin{gathered}
      a_0 = 3\sqrt{2},\quad
      a_1 = \tfrac{3}{2}\sqrt{6},\quad
      a_2 = \tfrac{3}{2}\sqrt{2},\quad
      a_3 = 3;\\
      d_0 = 1,\quad
      d_1 = -\tfrac{\sqrt{3}}{2},\quad
      d_2 = \tfrac{\sqrt{2}}{2},\quad
      d_3 = -\tfrac{1}{2};\\
      \alpha_0 = \arccos(\tfrac{\sqrt{3}}{3}),\quad
      \alpha_1 = \tfrac{\pi}{2},\quad
      \alpha_2 = \tfrac{\pi}{2},\quad
      \alpha_3 = \tfrac{\pi}{2}.
    \end{gathered}
  \end{equation}
  The second configuration of revolute axes is obtained by acting with $\cj{r}_1$ on $R_2$ and by $r_0$ on $R_3$. More precisely, it is given by the lines $Q_0 \coloneqq R_0$, $Q_1 \coloneqq R_1$, as well as
  \begin{equation*}
    \begin{aligned}
      Q_2 &= \ej{(\cj{r}_1 R_2 r_1)} = -24(\qj+\qk) - \eps(48\qi +
            36\qj-36\qk)\\
      \text{and}\quad
      Q_3 &= \ej{(r_0 R_3 \cj{r}_0)} = 2\qi - \eps(2\qj - 4\qk).
    \end{aligned}
  \end{equation*}
  Clearly, $Q_0$, $Q_1$, $Q_2$, and $Q_3$ determine the same Denavit-Hartenberg parameters~\eqref{eq:dh-parameters}.
\end{example}

To summarize: In the framework of dual quaternions, a snapping four-bar can be equivalently described by four dual quaternions $p_0$, $p_1$, $p_2$, $p_3 \in \SQ \subset \mathbb{P}^7(\mathbb{R})$ lying on the Study quadric such that the relative displacements \eqref{eq:relative-rotations} are rotations. These four relative displacements~$r_0$, $r_1$, $r_2$, $r_3$ constitute the corresponding rotational quadrilateral.

\section{Snapping Four-Bar Nets}
\label{sec:snapping-nets}

In analogy with snapping four-bars, in this subsection we generalize the definition to higher-order combinatorial structures, organized according to the lattice~$\mathbb{Z}^d$.

\begin{definition}
  A \emph{four-bar net} is a mechanical structure composed of revolute joints assigned to the edges of the net~$\mathbb{Z}^d$. Two revolute joints are connected by a rigid link precisely if their corresponding edges share a vertex. A four-bar net is said to be \emph{snapping} if it admits a second, incongruent configuration.
\end{definition}

Thus, a snapping four-bar net is a structure composed of snapping four-bars, such that not only each individual four-bar but the entire structure is bistable. Examples of CAD models of such structures are presented later in Section~\ref{sec:3d-print} and shown in Figure~\ref{fig:prints}.

Note that, as in~\cite{stachel25} for snapping four-bars, the definition of snapping four-bar nets also allows individual four-bars, or even all four-bars, to be flexible. To avoid additional technicalities, we do not exclude these special configurations in our analysis.

In order to describe snapping four-bar nets formally, we work with discrete maps defined on the common domain~$\mathbb{Z}^d$ but with values in different image spaces. In our setting, two such image spaces are of particular importance: first, discrete maps into the Study quadric~$\SQ$, also referred to as \emph{discrete $d$-dimensional motions;} and second, discrete maps into~$\mathbb{R}^3$. For $d=2$, the latter are called \emph{discrete surfaces} or \emph{quad nets in $\mathbb{R}^3$} and are fundamental objects in discrete differential geometry, where they are interpreted as discrete surface parametrizations as well as surface transformations~\cite{bobenko08}.

Our first goal is to extend the interpretation of a single snapping four-bar as a quadrilateral in the Study quadric, introduced in Section~\ref{sec:four-bars}, to the combinatorics of snapping four-bar nets. This will also serve to introduce the notation used throughout the text for discrete maps.

\begin{definition}
  A \emph{quad net} of dimension $d$ in projective space $\mathbb{P}^n$ is a map $p\colon \mathbb{Z}^d \to \mathbb{P}^n$. The image of $\vvi \in \mathbb{Z}^d$ is also denoted by $p_\vvi \coloneqq p(\vvi)$. It is called a \emph{vertex} of the net. An \emph{edge} is the connecting line of two vertices $p_\vvi$ and $p_\vvj$ where $\vvi-\vvj$ is a standard basis vector of~$\mathbb{R}^d$.
\end{definition}

\begin{remark*}
  The elements $\vvi$, $\vvj \in \mathbb{Z}^d$ etc.\ are actually multi-indices while they look like indices in our notation. This helps to lighten notation and confusion is not to be expected. Occasionally, when considering nets of dimension $d = 2$, we will also use a double index notation and write $p_{m,n}$ instead of $p_\vvi$ where $\vvi = (m,n) \in \mathbb{Z}^2$.
\end{remark*}

\begin{definition}
  A discrete map~$p\colon \mathbb{Z}^d \mapsto \SQ \subset \mathbb{P}(\D\bH) \cong \mathbb{P}^7(\mathbb{R})$ is called an $\SQ$-net if, for every edge~$(i,j)$, the relative displacement
  \begin{equation*}
    r_{ij} \coloneqq p_j\cj{p}_i
  \end{equation*}
  is a rotation. In this case, the discrete map~$r$ of relative displacements is referred to as a \emph{rotation net} or a \emph{discrete rolling motion}.
\end{definition}

An $\SQ$-net fits naturally into a more general framework which is well-known from discrete differential geometry, for example \cite{Hoffmann2025}:

\begin{definition}
  Given a hyper-quadric $\QQ \subset \mathbb{P}^n$, we call a quad net $p$ in~$\mathbb{P}^n$ a \emph{$\QQ$-net} if its vertices and its edges are contained in~$\QQ$.
\end{definition}

An $\SQ$-net generically gives rise to a snapping four-bar net. The argument is the same as in Section~\ref{sec:four-bars}: The two configurations of the snapping four-bar net can be regarded as the fixed and the moving axode, respectively, of the discrete rolling motion given by the $\SQ$-net, where any two neighboring poses (vertices) are connected by a relative rotation (an edge).

This simple kinematic argument automatically implies bistability of the whole structure. Its strength should not be underestimated. Not only the snappability of each individual four-bar is required but also the compatibility of all snaps in closed cycles of the mechanism. In order to better appreciate this, we study the topology of the underlying mechanism.

\subsection{Mechanisms on Quad Graphs}

Neighboring vertices $p_\vvi$ and $p_\vvj$ of the $\SQ$-net~$p$ in the Study quadric $\SQ$ describe two poses that correspond in a rotation. We should think of them as links of the underlying mechanism. Any two links are connected by a revolute joint if and only if the two vertices $p_\vvi$, $p_\vvj \in \SQ$ are connected by an edge (which lies in the Study quadric by definition). In other words, the revolute joints correspond to the edges of the $\SQ$-net and it is natural to view the $\SQ$-net as the mechanism's ``linkgraph'' in the sense of~\cite{li18}.

The relation between linkgraph and mechanism is illustrated at hand of a simple example in~Figure~\ref{fig:scissor}. It displays a planar scissor linkage on top of its linkgraph which is composed of two neighboring quads. Vertices of the linkgraph are indicated by filled dots and they are labeled. Revolute joints (empty dots) sit on edges of the linkgraph and are connected by links. The number of revolute joints attached to a link equals the valence of the corresponding linkgraph vertex. The linkgraph vertex $p_{0,0}$ represents the fixed frame, the opposite vertex $p_{2,1}$ represents the moving frame.

\begin{figure}
  \centering
  \includegraphics[scale=1.0]{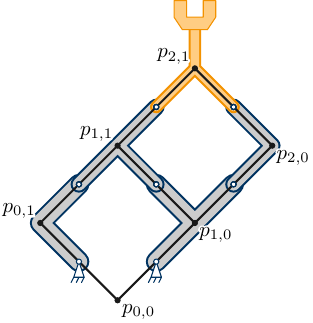}
  \caption{Schematic sketch of a planar scissor linkage. Its linkgraph consists of two quads. Note that graph vertices (black dots) correspond to links while edges correspond to revolute joints (white dots).}
  \label{fig:scissor}
\end{figure}

The planar mechanism in Figure~\ref{fig:scissor} only serves illustration purposes. It is movable with one degree of freedom. If the underlying quad net (consisting of six vertices and seven edges) in $\SQ$ is generic, it gives rise to a bistable scissor like structure composed of two \emph{spatial four-bars.}

\subsection{Quad Nets in Quadrics}
\label{sec:quadric-nets}

We now study existence and computation of $\QQ$-nets. Our results are applicable to quadrics in projective spaces of any dimension but we only formulate them for the Study quadric $\QQ = \SQ \subset \mathbb{P}^7(\mathbb{R})$ and argue that quadric nets defined on $\mathbb{Z}^d$ exist for $d \le 6$.

With
\[
  s(x,y) \coloneqq x_0y_4 + x_1y_5 + x_2y_6 + x_3y_7 + x_4y_0 + x_5y_1 + x_6y_2 + x_7y_3
\]
being the bilinear form to the Study quadric $\SQ$, the $\SQ$-net conditions are:
\begin{equation}
  \label{eq:Q-net-conditions}
  \begin{aligned}
    s(p_\vvi,p_\vvi) &= 0 \quad \text{for any $\vvi \in \mathbb{Z}^d$,}\\
    s(p_\vvi,p_\vvj) &= 0 \quad \text{for any edge $(\vvi,\vvj)$.}
  \end{aligned}
\end{equation}

Thus, an $\SQ$-net $p$ can be constructed iteratively as follows:
\begin{itemize}
\item Arbitrarily prescribe the value of $p$ at $(0,0,\ldots,0)\in\mathbb{Z}^d$. It is subject to only one quadratic condition, the Study condition.
\item Construct the values of $p$ along the $d$~coordinate axes by subsequently solving systems of one linear and one quadratic equation of type \eqref{eq:Q-net-conditions}.
\item For $k = 2, 3, \ldots, d$, construct the values of $p$ along the $k$-dimensional coordinate planes by subsequently solving systems of $k$ linear and one quadratic equation of type \eqref{eq:Q-net-conditions}. Generically, solutions will exist because the number of equations does not exceed the number of dimensions of the ambient space.
\end{itemize}

Note that non-generic straight lines in $\SQ$ that correspond to translations should be avoided in this construction.

\begin{figure}
  \centering
  \includegraphics[]{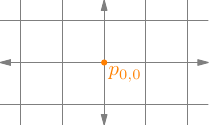}\hfill%
  \includegraphics[]{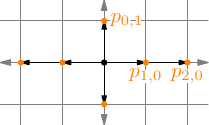}\hfill%
  \includegraphics[]{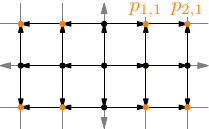}
  \caption{Construction of a $\QQ$-net on $\mathbb{Z}^2$. We prescribe the origin (\emph{left}), then compute the values along the two coordinate axes (\emph{center}) before consecutively determining the remaining values (\emph{right}).}
  \label{fig:qnet}
\end{figure}

More concretely, in case of $d = 2$:
\begin{itemize}
\item Select $p_{0,0}$ subject to $s(p_{0,0},p_{0,0}) = 0$ (Figure~\ref{fig:qnet}, left).
\item For $i \ge 1$, select $p_{i,0}$ subject to $s(p_{i-1,0},p_{i,0}) = s(p_{i,0},p_{i,0}) = 0$ and similar for $p_{-i,0}$, $p_{0,i}$, and $p_{0,-i}$ (Figure~\ref{fig:qnet}, middle).
\item Select $p_{1,1}$ subject to $s(p_{1,0},p_{1,1}) = s(p_{0,1},p_{1,1}) = s(p_{1,1},p_{1,1}) = 0$ and similar for $p_{-1,1}$, $p_{1,-1}$, and $p_{-1,-1}$. In the same manner, we compute $p_{i,j}$ for $i$, $j > 0$ once $p_{i-1,j}$ and $p_{i,j-1}$ are determined (and similar for $p_{-i,j}$, $p_{i,-j}$, and $p_{-i,-j}$) (Figure~\ref{fig:qnet}, right).
\end{itemize}

Let us conclude this section by emphasizing that the outlined construction shows \emph{theoretical existence} of snapping four-bar nets. It suffers from similar drawbacks as the constructions described in Section~\ref{sec:construction}: The actual position of revolute axes is hard to control as is the revolute angles between two configurations.

\section{Surface Design}
\label{sec:surface-design}

Having established existence of snapping four-bar nets, we now turn to their actual construction. In order to obtain feasible mechanisms, it is necessary to control the axes' positions as well as the \emph{snap angles}, that is, the revolute angles in each axis between the two configurations. The former is important because the axes need to be connected by rigid links of reasonable size and shape, the latter is important for two reasons:
\begin{enumerate}
\item Large snap angles tend to come along with much strain on the links and an increased risk of self collision during the snap.
\item Small snap angles will produce structures that are close to being shaky with an entirely different mechanical behavior.
\end{enumerate}

The purpose of this section is to present a method for generating ``surface-like'' snapping four-bar nets while allowing control over the issues mentioned above. The approach builds on the interpretation of snapping four-bar nets as $\SQ$-nets in the Study quadric and their associated discrete rolling motions. A key observation is that these discrete rolling motions can themselves be generated from a pair of discrete surfaces with congruent faces, thereby connecting our problem with a further class of well-studied objects in discrete differential geometry.

For dimension $d=2$, we propose a procedure that begins with a special infinitesimally flexible discrete surface, realized as a discrete map $f\colon \mathbb{Z}^2 \to \mathbb{R}^3$. Via Whiteley de-averaging \cite{Whiteley2004}, we then construct two discrete surfaces with congruent faces, which in turn generate a discrete rolling motion and thus a snapping four-bar net.

\subsection{Rolling of Discrete Surfaces with Congruent Faces}
\label{sec:rolling-nets}

The basic idea is to construct discrete rolling motions not as quad nets in the Study quadric $\SQ$ but via the ``rolling'' of one discrete surface in $\mathbb{R}^3$ onto a second isometric surface.

Thus, assume that $\Ng$, $\Nh \colon \mathbb{Z}^2 \to \mathbb{R}^3$ are two discrete surfaces with directly congruent faces. In other words, for any quadruple $(\vvi,\vvj,\vvk,\vvl)$ of multi-indices describing a face (that is, $\vvi=(m,n)$, $\vvj=(m+1,n)$, $\vvk=(m+1,n+1)$, $\vvl=(m,n+1)$ for some $m$, $n \in \mathbb{Z}$), the two spatial quadrilaterals
\[
  \Ng_\vvi, \Ng_\vvj, \Ng_\vvk, \Ng_\vvl
  \quad\text{and}\quad
  \Nh_\vvi, \Nh_\vvj, \Nh_\vvk, \Nh_\vvl
\]
correspond in a rigid body transformation. We now assign a checkerboard black-and-white coloring to the faces of both discrete surfaces, so that we can distinguish pairs of congruent black faces from pairs of congruent white faces (Figure~\ref{fig:rolling-surfaces}, top left and top center). Fixing the discrete surface~$\Ng$ in space, we then ``roll'' the discrete surface~$\Nh$ on $\Ng$ to generate rotations that constitute a rotation net.

To this end, consider the four poses of the net $\Nh$ for which the corresponding black faces of $\Ng$ and $\Nh$ around one given white face of~$\Ng$ coincide (Figure~\ref{fig:rolling-surfaces}, bottom). Any two neighboring poses are related by a rotation around an axis passing through a vertex of~$\Ng$; this follows from Euler's Rotation Theorem \cite[Chapter~VII]{bottema90}. Moreover, by construction, the composition of these four rotations is the identity.

Thus, by our results in Section~\ref{sec:rotation-quadrilaterals}, \emph{rolling $\Nh$ around one white face of~$\Ng$}, gives rise to a rotation quadrilateral and, subsequently, to one single snapping four-bar. In particular, it generates one face of an $\SQ$-net~$p\colon \mathbb{Z}^2 \to \SQ$. The vertices of the corresponding face in the $\SQ$-net are naturally associated to the black faces of the checkerboard pattern (Figure~\ref{fig:rolling-surfaces}, top right).

This concept of rolling around a single face naturally extends to a discrete rolling of the entire discrete surface $\Nh$ on $\Ng$. In this way, two discrete surfaces $\Ng$ and $\Nh$ with directly congruent faces give rise, via the checkerboard pattern, to an $\SQ$-net in the Study quadric~$\SQ$.

The immediate advantage of this construction is that each revolute axis in the fixed frame (the frame of $\Ng$) contains a unique vertex of $\Ng$, and each revolute axis in the moving frame (the frame of $\Nh$) contains a unique vertex of $\Nh$. Hence, the geometry of the two discrete surfaces is directly related to the shape of the rigid links required to realize the snapping four-bar net as a mechanism.

At this stage, we do not control the axis directions, but Equation~\eqref{eq:axis-direction} in Section~\ref{sec:axis-angles} will provide a simple formula in a particular case. However, this is less relevant as only a relatively short portion of each revolute axis is needed for a mechanical revolute joint to be attached.

\begin{figure}
  \centering
  \includegraphics[]{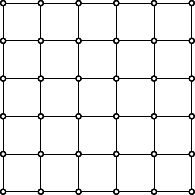}\qquad
  \includegraphics[]{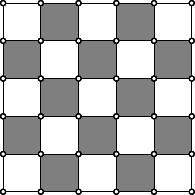}\qquad
  \includegraphics[]{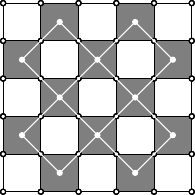}\\[1ex]
  \includegraphics{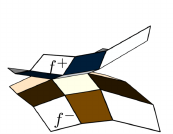}%
  \includegraphics{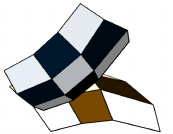}%
  \includegraphics{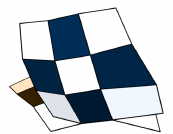}%
  \includegraphics{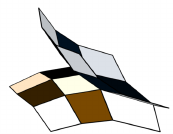}
  \caption{Top: A graph with $\mathbb{Z}^2$-combinatorics with black and white coloring of the faces and the related dual graph. Bottom: The quad net $\Nh$ rolls on the quad net $\Ng$. Both $\Ng$ and $\Nh$ correspond to the original graph while the discrete rolling motion corresponds to the dual graph (\emph{right}).}
  \label{fig:rolling-surfaces}
\end{figure}

\begin{remark}
  The discrete surfaces~$\Ng$ and $\Nh$ might be considered as fixed and moving axode, respectively, of a discrete 2-dimensional motion where neighboring poses correspond in a rotation. This suggests viewing our proposed approach as a discrete version of a classical concept in differential geometry, the rolling of two isometric smooth surfaces without slipping. In particular, the resulting rotation nets can be interpreted as a discrete counterpart of the ``Drehriss'' discussed in~\cite{blaschke13}. Our approach also aligns a lot with concepts of theoretical space kinematics like rolling and slipping of axodes. Note however, that there is no slipping in our case (compare Figures~\ref{fig:rotations} and \ref{fig:rolling-surfaces}). Moreover, two-parametric motions are rare in classic kinematics literature.
\end{remark}

The attentive reader might have noticed that our construction requires direct congruence only between corresponding \emph{black} faces of $\Ng$ and $\Nh$. However, as we will show below, using discrete surfaces with \emph{all} faces directly congruent does not impose a significant restriction: For a given snapping four-bar net, the discrete surfaces $\Ng$ and $\Nh$ can be chosen such that all corresponding faces are congruent. We begin by considering the case of a single snapping four-bar.

\begin{proposition}
  \label{prop:congr_quad}
  For every snapping four-bar with axes~$(R_\vvi, R_\vvj, R_\vvk, R_\vvl)$, there exists a quadrilateral with vertices~$(\Ng_\vvi, \Ng_\vvj, \Ng_\vvk, \Ng_\vvl)$ located on the respective axes that is congruent to the corresponding quadrilateral~$(\Nh_\vvi, \Nh_\vvj, \Nh_\vvk, \Nh_\vvl)$ in the second configuration~$(R'_\vvi, R'_\vvj, R'_\vvk, R'_\vvl)$ of the snapping four-bar.

  Generically, two neighboring points, say $\Ng_\vvi$ and $\Ng_\vvj$, can be prescribed whence there exists a $2$-parameter family of solutions.
\end{proposition}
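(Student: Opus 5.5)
The plan is to reduce congruence of the two quadrilaterals to equality of their six mutual distances, and then to show that four of these six equations hold automatically for a suitable parametrization of the axes.

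\emph{Parametrizing the axes.} The two configurations $(R_\vvi,R_\vvj,R_\vvk,R_\vvl)$ and $(R'_\vvi,R'_\vvj,R'_\vvk,R'_\vvl)$ have identical Denavit-Hartenberg parameters. So every pair of consecutive axes $(R_\vvi,R_\vvj)$ is congruent to $(R'_\vvi,R'_\vvj)$, and likewise for the other three consecutive pairs. On each axis I fix an origin and a unit direction in a way determined only by the Denavit-Hartenberg data: for example, the foot of the common perpendicular with the preceding axis, with the offset $d$ to the foot of the next common perpendicular being the same in both configurations. This gives parametrizations $R_\vvx(t)$ and $R'_\vvx(t)$ for $\vvx\in\{\vvi,\vvj,\vvk,\vvl\}$. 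I then use one and the same parameter $t_\vvx$ in both configurations, setting $\Ng_\vvx=R_\vvx(t_\vvx)$ and $\Nh_\vvx=R'_\vvx(t_\vvx)$. Because the isometry carrying $(R_\vvi,R_\vvj)$ to $(R'_\vvi,R'_\vvj)$ respects these parametrizations, the four side lengths $\|\Ng_\vvi-\Ng_\vvj\|,\dots$ equal $\|\Nh_\vvi-\Nh_\vvj\|,\dots$ identically in the parameters.

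\emph{Diagonal conditions.} Only the two diagonals remain. For two parametrized lines $A+tu$ and $B+sv$ with unit vectors $u$, $v$, the squared distance is
\[
  \|A-B\|^2+t^2+s^2+2t\,u\cdot(A-B)-2s\,v\cdot(A-B)-2ts\,u\cdot v .
\]
Taking the difference between the two configurations, the terms $t^2$ and $s^2$ cancel. The condition $\|\Ng_\vvi-\Ng_\vvk\|=\|\Nh_\vvi-\Nh_\vvk\|$ therefore becomes a bilinear equation $a\,t_\vvi t_\vvk+b\,t_\vvi+c\,t_\vvk+e=0$ in $t_\vvi,t_\vvk$ alone. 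Analogously, the condition for the other diagonal is a bilinear equation in $t_\vvj,t_\vvl$ alone. Prescribing the neighboring points $\Ng_\vvi$ and $\Ng_\vvj$, i.e.\ $t_\vvi$ and $t_\vvj$, each equation becomes linear in the remaining unknown. Generically, that is when $a t_\vvi+c\neq0$ and $a t_\vvj+c'\neq0$, this determines $t_\vvk$ and $t_\vvl$ uniquely. Hence the solution set is a $2$-parameter family parametrized by the two prescribed points. For the bare existence claim ``for every snapping four-bar'', I note that a real bilinear equation always has real solutions: a hyperbola, a pair of lines, or a line, unless it is the inconsistent constant case $a=b=c=0\neq e$. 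If the linear coefficient vanishes for the chosen $t_\vvi$, I simply choose a different $t_\vvi$.

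\emph{Main obstacle.} The step I expect to be hardest is passing from six equal distances to \emph{direct} congruence, and excluding the inconsistent degenerate case. Equal mutual distances give congruence only up to a possible reflection. My first attempt would be a continuity or counting argument: the mirror alternative imposes an extra sign condition on the oriented volume $\det(\Ng_\vvj-\Ng_\vvi,\Ng_\vvk-\Ng_\vvi,\Ng_\vvl-\Ng_\vvi)$. I would check whether this sign flips within the $2$-parameter family; where the quadrilaterals are planar the issue disappears. Alternatively, I would use the rotation-quadrilateral description of Section~\ref{sec:rotation-quadrilaterals}: by \eqref{eq:rotations-identity}, the point correspondence between the two configurations is realized by the composition of the rotations $\varrho_\vvx$ and hence should be orientation preserving. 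To rule out $a=b=c=0\neq e$, I would use that the two configurations are incongruent but share all Denavit-Hartenberg parameters, and verify that this forces the diagonal functions to differ by more than a nonzero constant.
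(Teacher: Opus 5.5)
Your argument is the paper's proof written algebraically. Move the second configuration so that $R_k=R'_k$ and $R_l=R'_l$. Then your bilinear diagonal equation with $t_i$ fixed says exactly that $\Ng_k$ lies on the perpendicular bisector plane $\mathcal{E}_i$ of $\NTg_i$ and $\NTh_i$. The paper sets $\Ng_k=\mathcal{E}_i\cap R_k$ and $\Ng_l=\mathcal{E}_j\cap R_l$, and checks the six distances via two pairs of mirror-image triangles. That is your count: four automatic sides plus two decoupled diagonal conditions. For the sides you do not need Denavit--Hartenberg conventions, which are awkward for parallel neighbouring axes. A point on an axis is carried along by both adjacent links, and each side joins two points of a single rigid link.

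Your ``main obstacle'' paragraph should be dropped, for two reasons.

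\emph{Direct congruence is not needed.} Six equal distances give congruence, which is all the proposition claims, and Remark~\ref{rem:congruent-quads} explicitly leaves direct congruence open. Your suggested route via \eqref{eq:rotations-identity} would fail anyway, because the vertices are not moved by one displacement. In the aligned pose, $\Ng_k$ and $\Ng_l$ stay fixed, while $\NTg_i$ and $\NTg_j$ are rotated about $R_l$ and $R_k$, respectively.

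\emph{The case $a=b=c=0\neq e$ cannot be excluded by incongruence, because it actually occurs.} For a planar four-bar, which the paper counts as snapping, all rotations preserve heights. Hence $\|\Nh_i-\Nh_k\|^2-\|\Ng_i-\Ng_k\|^2$ equals the difference of the squared planar diagonals, a nonzero constant for generic pairs of configurations.

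A genuinely bistable example is the following:
\begin{itemize}
\item Axes: $R_l$ is the $z$-axis, $R_k=\{(1,0,z)\}$, $R_i=\{(1+s,1-s,0)\}$ and $R_j=\{(\tfrac32+s,\tfrac32-s,\tfrac{\sqrt2}{2})\}$.
\item Second assembly: keep the link carrying $R_k,R_l$ fixed and apply quarter turns about $R_l$ and $R_k$ to the adjacent links. The link carrying $R_i,R_j$ then fits, and one can check there is no further assembly.
\item Corresponding points: $\Ng_i=(1+s,1-s,0)$ goes to $\Nh_i=(s-1,s+1,0)$, and $\Ng_k=\Nh_k=(1,0,z)$.
\item Result: $\|\Nh_i-\Nh_k\|^2-\|\Ng_i-\Ng_k\|^2=4$ for all $s$ and $z$.
\end{itemize}
So no congruent quadrilateral on the axes exists in this example.

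The existence claim therefore holds only generically. The paper's proof makes the same tacit assumption: here $\mathcal{E}_i$ contains $R_l$, because $\NTh_i$ is obtained from $\NTg_i$ by a rotation about $R_l$. Since $R_l\parallel R_k$, this gives $\mathcal{E}_i\cap R_k=\emptyset$. You should state the nondegeneracy ($R_k\nparallel\mathcal{E}_i$, $R_l\nparallel\mathcal{E}_j$ for some choice of $\NTg_i,\NTg_j$) as a hypothesis rather than try to derive it.
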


One may view Proposition~\ref{prop:congr_quad} as the converse of \cite[Theorem~6(i)]{stachel25} where existence of axes $(R_\vvi, R_\vvj, R_\vvk, R_\vvl)$ to given congruent quadrilaterals $(\Ng_\vvi,\Ng_\vvj,\Ng_\vvk,\Ng_\vvl)$ and $(\Nh_\vvi,\Nh_\vvj,\Nh_\vvk,\Nh_\vvl)$ is proved.

\begin{proof}[Proof of Proposition~\ref{prop:congr_quad}]
  Let $(\NTg_\vvi, \NTg_\vvj, \NTg_\vvk, \NTg_\vvl)$ be an arbitrary quadrilateral whose vertices lie on the axes of a snapping four-bar in one configuration and denote by $(\NTh_\vvi, \NTh_\vvj, \NTh_\vvk, \NTh_\vvl)$ the corresponding quadrilateral in the second configuration. We show that, by appropriately translating the two vertices $\NTg_\vvk$ and $\NTg_\vvl$ on neighboring axes along their respective axes, one can obtain a modified quadrilateral such that the new corresponding quadrilateral in the second configuration is congruent to it.

To this end, we assume that the second configuration of the snapping four-bar has been rigidly transformed into a pose where the axes $R_\vvk$ and $R_\vvl$ coincide with $R'_\vvk$ and $R'_\vvl$, respectively. We then consider the perpendicular bisector planes~$\mathcal{E}_\vvi$ of $\NTg_\vvi$ and $\NTh_\vvi$, as well as $\mathcal{E}_\vvj$ of $\NTg_\vvj$ and $\NTh_\vvj$.

The new vertices are defined by intersecting the coincident axes $R_\vvk=R'_\vvk$ and $R_\vvl=R'_\vvl$ with the bisector planes corresponding to the diagonally opposite vertices:
\begin{equation*}
\Ng_\vvk = \Nh_\vvk \coloneqq \mathcal{E}_i \cap R_k \quad \text{and} \quad \Ng_\vvl = \Nh_\vvl \coloneqq \mathcal{E}_j \cap R_l.
\end{equation*}
In this way, we obtain two new quadrilaterals $Q\coloneqq(\NTg_\vvi, \NTg_\vvj, \Ng_\vvk, \Ng_\vvl)$ and $Q'\coloneqq(\NTh_\vvi, \NTh_\vvj, \Nh_\vvk, \Nh_\vvl)$. By construction, the triangles $(\NTg_\vvj, \Ng_\vvk, \Ng_\vvl)$ and $(\NTh_\vvj, \Nh_\vvk, \Nh_\vvl)$ are congruent, because they are mirror images of each other with respect to the bisector plane~$\mathcal{E}_j$. The same holds true for the triangles $(\Ng_\vvk, \Ng_\vvl, \NTg_\vvi)$ and $(\Nh_\vvk, \Nh_\vvl, \NTh_\vvi)$ with respect to~$\mathcal{E}_i$. Because the distance from $\NTg_\vvi$ to $\NTg_\vvj$ equals the distance from $\NTh_\vvi$ to $\NTh_\vvj$, the two newly constructed quadrilaterals $Q$ and $Q'$ are congruent to each other.
\end{proof}

\begin{remark}
  \label{rem:congruent-quads}
  We do not claim or prove that the two quadrilaterals $(\Ng_\vvi, \Ng_\vvj, \Ng_\vvk, \Ng_\vvl)$ and $(\Nh_\vvi, \Nh_\vvj, \Nh_\vvk, \Nh_\vvl)$ in Proposition~\ref{prop:congr_quad} are \emph{directly} congruent. However, we believe that, generically, directly congruent quads exist, provided that all four vertices are allowed to be shifted appropriately along the rotation axes.

  As we will see in the next theorem, the two-parameter freedom stated in the proposition is necessary for a snapping four-bar net to avoid additional global compatibility conditions, which appear to be difficult to control.
\end{remark}

\begin{theorem}
  \label{th:1}
  For every snapping four-bar net there exist discrete surfaces~$\Ng$, $\Nh \colon \mathbb{Z}^2 \to \mathbb{R}^3$ and checkerboard colorings of $\Ng$ and $\Nh$ such that
  \begin{itemize}
  \item corresponding black faces of $\Ng$ and $\Nh$ are directly congruent,
  \item corresponding white faces of $\Ng$ and $\Nh$ are congruent, and
  \item the discrete rolling of the black faces of~$\Nh$ onto the black faces of~$\Ng$ generates the rotation net of the snapping four-bar net.
  \end{itemize}
\end{theorem}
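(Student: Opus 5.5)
The idea is to treat $\Ng$ and $\Nh$ as the two placements of a single choice of points, one point on each revolute axis. The black faces are then automatically directly congruent, and Proposition~\ref{prop:congr_quad} is used, four-bar by four-bar, to make the white faces congruent.

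\emph{Combinatorics.} The linkgraph of the snapping four-bar net is $\mathbb{Z}^2$: its vertices are links, its edges are revolute joints, and its faces are the individual four-bars. The discrete surfaces live on the medial graph, which again has $\mathbb{Z}^2$-combinatorics:
\begin{itemize}
\item a vertex of $\Ng$ (or $\Nh$) corresponds to an edge of the linkgraph, that is, to a revolute axis;
\item a black face corresponds to a linkgraph vertex, that is, to a link, and its four vertices are the four joints on that link;
\item a white face corresponds to a linkgraph face, that is, to a four-bar, and its four vertices are the four axes of that four-bar in cyclic order.
\end{itemize}
This is exactly the dual picture of Figure~\ref{fig:rolling-surfaces}.

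\emph{Point choice and the first and third items.} For every axis, pick a point $X$ on it. A point on a revolute axis is fixed relative to both links adjacent to that joint. Let $\Ng$ be these points in the first configuration and $\Nh$ the same points in the second configuration. Take a black face, i.e.\ a link. Its four vertices are points fixed in that one rigid body, and the two configurations place the body by a rigid displacement. Hence corresponding black faces are directly congruent for \emph{any} choice of points.

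Moreover, the displacement carrying the black face of $\Nh$ onto that of $\Ng$ is precisely the relative pose of this link between the two configurations. By the axode interpretation in Section~\ref{sec:rolling-nets}, these poses are the vertices of the $\SQ$-net, and two neighbouring poses differ by the rotation about the shared joint axis. So the rolling of black faces of $\Nh$ on $\Ng$ reproduces the rotation net, which proves the third item. Everything therefore reduces to choosing the points so that every white quad $(\Ng_\vvi,\Ng_\vvj,\Ng_\vvk,\Ng_\vvl)$ is congruent to $(\Nh_\vvi,\Nh_\vvj,\Nh_\vvk,\Nh_\vvl)$.

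\emph{Iterative choice via Proposition~\ref{prop:congr_quad}.} Index the four-bars by the faces $(m,n)$ of the linkgraph and first treat the quadrant $m,n\ge 0$. Four-bar $(m,n)$ has four axes:
\begin{itemize}
\item its bottom axis, shared with four-bar $(m,n-1)$;
\item its left axis, shared with $(m-1,n)$;
\item its top and right axes, shared with four-bars not yet processed.
\end{itemize}
The bottom and left axes meet at the lower-left corner link, so they are \emph{neighbouring} vertices of the white quad. We process the four-bars in order of increasing $m+n$.
\begin{enumerate}
\item Choose points arbitrarily on the bottom and left axes of the four-bars with $n=0$ or $m=0$ wherever these axes are not yet determined.
\item When four-bar $(m,n)$ is reached, its bottom and left points are already fixed.
\item Proposition~\ref{prop:congr_quad}, with these two neighbouring points prescribed, supplies points on the top and right axes that make the white quad congruent. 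In its proof these points are constructed as one point on each axis lying on both configurations after alignment, so they are legitimate body-fixed points on the axes.
\item These new points are exactly the bottom and left data needed by four-bars $(m,n+1)$ and $(m+1,n)$.
\end{enumerate}
Each axis lies on exactly two four-bars. It is determined once, by the earlier of the two, and used as given data by the later one, so no conflicting condition ever arises. The other three quadrants are treated in the same way, sweeping outward from the origin as in Figure~\ref{fig:qnet}, with the orientations of ``bottom/left'' reflected accordingly.

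\emph{Main obstacle.} The delicate point is that this induction needs, at every step, precisely the freedom granted by Proposition~\ref{prop:congr_quad}: two adjacent prescribed vertices, with the remaining two free. The staircase ordering is what guarantees that a four-bar is never confronted with three or four predetermined points, which would create the global compatibility conditions mentioned in Remark~\ref{rem:congruent-quads}. Two further caveats remain:
\begin{itemize}
\item The bisector-plane construction degenerates when an axis is parallel to the relevant bisector plane. Generically this does not occur, and otherwise the free initial points can be perturbed, so the statement should be read with the usual genericity proviso.
\item Only congruence, not direct congruence, is obtained for the white faces. This is exactly what the second item of the theorem asserts.
\end{itemize}
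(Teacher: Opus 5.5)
Your proposal is correct and is essentially the paper's proof. The axes you prescribe, namely the horizontal joints along linkgraph row $n=0$ and the vertical joints along column $m=0$, are exactly the paper's initial data (the central black quad together with the diagonal lines $n-m=0$ and $n+m=1$ of the surface lattice), and your quadrant-wise staircase sweep is the paper's ring-by-ring application of Proposition~\ref{prop:congr_quad}, with the extra benefit that you make explicit why black faces are automatically directly congruent and why the rolling reproduces the rotation net.
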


\begin{proof}
  We provide an iterative construction of $\Ng$. Once the values of $\Ng$ are given, the discrete surface~$\Nh$ is automatically defined.

  The snapping four-bar net provides revolute axes in the fixed and in the moving frame and we arbitrarily prescribe the values of $\Ng$
  \begin{itemize}
  \item on the central black quad with vertices $(0,0)$, $(1, 0)$, $(1,1)$, and $(0,1)$ and
  \item on the remaining vertices along the diagonal lines given by $n-m=0$ and $n+m=+1$, respectively.
  \end{itemize}
  This initial data is illustrated in Figure~\ref{fig:th:1} with filled dots.

  Proposition~\ref{prop:congr_quad} now provides us with the values of $\Ng$ on the four white quads around the central quad (empty dots in Figure~\ref{fig:th:1}). Together with the prescribed diagonal values, all vertices in the one-ring neighborhood of the central quad are determined. Proceeding in like manner we determine the values of $\Ng$ iteratively on the two-ring neighborhood, the three-ring neighborhood etc.\ of the central quad.
\end{proof}

\begin{figure}
  \centering
  \includegraphics[]{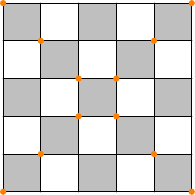}\quad
  \includegraphics[]{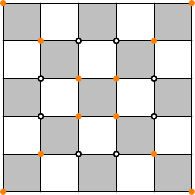}\quad
  \includegraphics[]{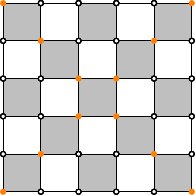}\quad
  \caption{Initial data and iterative construction for Theorem~\ref{th:1}.}
  \label{fig:th:1}
\end{figure}

\begin{remark}
  Similar to Remark~\ref{rem:congruent-quads}, Theorem~\ref{th:1} does not guarantee direct congruence of the white quads. At present, it remains unclear whether, for every snapping four-bar net, there exist discrete surfaces $\Ng$ and $\Nh$ whose black and white faces are directly congruent.
\end{remark}

We end this subsection with an observation showing that the geometry of a snapping 4-bar is strongly related to the geometry of the associated congruent quads discussed in the previous paragraphs.

\begin{proposition}\label{prop:planarity}
  Suppose that a snapping four-bar admits a \emph{planar} quadrilateral with vertices on its axes that is congruent in the second configuration. Then any two neighboring rotation axes of the four-bar are coplanar.
\end{proposition}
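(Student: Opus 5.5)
The plan is to reduce everything to one pair of neighboring axes and to place the two configurations so that the link between these axes is common to both. By the cyclic symmetry of the hypothesis, it suffices to prove that $R_\vvi$ and $R_\vvj$ are coplanar. As in Section~\ref{sec:rotation-quadrilaterals}, I rigidly move the second configuration so that $R'_\vvi = R_\vvi$ and $R'_\vvj = R_\vvj$, with the link between them coinciding in both configurations. The vertices are points of the links lying on the axes, so after this normalization $\Nh_\vvi = \Ng_\vvi$ and $\Nh_\vvj = \Ng_\vvj$. Moreover, the link carrying $R_\vvj$ and $R_\vvk$ differs between the two configurations by a rotation about $R_\vvj$. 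Hence $\Nh_\vvk$ is the image of $\Ng_\vvk$ under a rotation about $R_\vvj$, and likewise $\Nh_\vvl$ is the image of $\Ng_\vvl$ under a rotation about $R_\vvi$.

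First key step: locate the axes. Assume $\Nh_\vvk \neq \Ng_\vvk$. Then the axis $R_\vvj$ lies in the perpendicular bisector plane $B_\vvk$ of $\Ng_\vvk$ and $\Nh_\vvk$, since every point of a rotation axis is equidistant from a point and its image. By the same reasoning, $R_\vvi$ lies in the perpendicular bisector plane $B_\vvl$ of $\Ng_\vvl$ and $\Nh_\vvl$. It therefore suffices to show $B_\vvk = B_\vvl$.

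Second key step: use the congruence. The quadrilaterals $(\Ng_\vvi,\Ng_\vvj,\Ng_\vvk,\Ng_\vvl)$ and $(\Nh_\vvi,\Nh_\vvj,\Nh_\vvk,\Nh_\vvl)$ are congruent and share the vertices $\Ng_\vvi=\Nh_\vvi$ and $\Ng_\vvj=\Nh_\vvj$. So the congruence is realized by an isometry $\tau$ fixing both of these points, hence fixing the line $g$ through them pointwise. Consequently $\tau$ is either a rotation about $g$ or a reflection in a plane through $g$, and it maps $\Ng_\vvk \mapsto \Nh_\vvk$ and $\Ng_\vvl \mapsto \Nh_\vvl$. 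Here I would check that the congruence of quadrilaterals, possibly an opposite one as in Proposition~\ref{prop:congr_quad}, really is induced by a single isometry. For four points this follows from equality of all six mutual distances, i.e.\ the four edge lengths together with both diagonals, which I take to be the meaning of ``congruent''.

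Now bring in planarity. The plane $\Pi$ of the first quadrilateral contains $g$, and $\tau(\Pi)$ is a plane through $g$. For a rotation about $g$, or a reflection in a plane through $g$, every point $x \in \Pi\setminus g$ has the same perpendicular bisector plane of $x$ and $\tau(x)$: it is the plane through $g$ bisecting the dihedral angle between $\Pi$ and $\tau(\Pi)$. In the reflection case with $\tau(\Pi)=\Pi$, it is the plane through $g$ perpendicular to $\Pi$. Applying this to $x = \Ng_\vvk$ and $x = \Ng_\vvl$ gives $B_\vvk = B_\vvl$, so $R_\vvi$ and $R_\vvj$ lie in a common plane. This also explains why planarity is needed: for a skew quadrilateral the two bisector planes both contain $g$ but generally differ.

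The main obstacle I expect is the degenerate cases rather than the generic argument. If $\tau$ is the identity, or if $\Ng_\vvk$ or $\Ng_\vvl$ lies on $g$ or is fixed by $\tau$, a bisector plane is undefined. In that situation $\Nh_\vvk=\Ng_\vvk$, say, and the rotation about $R_\vvj$ fixes $\Ng_\vvk$. Then either the snap angle at $R_\vvj$ is zero, the flexible/shaky situation the paper admits, or $\Ng_\vvk$ lies on $R_\vvj$, so that $R_\vvj$ meets $R_\vvk$ there. I would handle these cases separately. In the first case I would use the remaining nontrivial bisector plane, or the common point of the axes. If all snap angles vanish, the statement has no content and can be treated as trivially satisfied or excluded as non-snapping. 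Finally, the argument is repeated for each of the four neighboring pairs by cyclic relabeling.
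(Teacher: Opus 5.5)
Your proposal is correct and is essentially the paper's proof. You align two neighboring axes, note that the remaining two axes lie in the perpendicular bisector planes of the two remaining vertex pairs, and use planarity to force these planes to coincide; the only difference is that you actually justify this coincidence (the congruence is an isometry fixing the shared edge pointwise, hence a rotation about it or a reflection in a plane through it), which the paper merely asserts, and your argument is, like the paper's, a generic one, since the degenerate cases you flag are not treated there either.
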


\begin{proof}
  With the notation in Proposition~\ref{prop:congr_quad}, we assume that the second configuration of the snapping four-bar has been rigidly transformed such that the axes $R_k$ and $R_l$ coincide with $R'_k$ and $R'_l$, respectively. Since the two associated quadrilaterals~$(\Ng_\vvi, \Ng_\vvj, \Ng_\vvk, \Ng_\vvl)$ and $(\Nh_\vvi, \Nh_\vvj, \Nh_\vvk, \Nh_\vvl)$ are planar, the two bisecting planes $\mathcal{E}_i$ and $\mathcal{E}_j$ of diagonally opposite vertices coincide, that is, $\mathcal{E}_i=\mathcal{E}_j$. On the other hand, for every snapping four-bar we know that $R_\vvk \in \mathcal{E}_\vvi$ and $R_\vvl \in \mathcal{E}_\vvj$. Consequently, any two neighboring axes are coplanar.
\end{proof}

Proposition~\ref{prop:planarity} extends naturally to snapping four-bar nets. Indeed, any pair of face-isometric surfaces with planar faces gives rise to a snapping four-bar net composed of intersecting neighboring axes.

Theorem~\ref{th:1} and the other insights of this section pose the question, how to find pairs $\Ng$, $\Nh$ of quad nets in~$\mathbb{R}^3$ with directly congruent faces? In addition, it is also important to control some sort of ``distance'' between $\Ng$ and $\Nh$: If $\Ng$ and $\Nh$ are too far apart, the snap might be too large. If $\Ng$ and $\Nh$ are too close the snap becomes weak and rather feels like one shaky surface. In the upcoming Sections~\ref{sec:isometric-from-infinitesimally} and \ref{sec:koenigs-nets} we will address this problem.

\subsection{Isometric Quad Nets from Infinitesimally Flexible Nets}
\label{sec:isometric-from-infinitesimally}

Going back to R.~Sauer \cite{sauer33, sauer37}, we define:
\begin{definition}
  Two discrete surfaces are called
  \begin{itemize}
  \item \emph{isometric} if corresponding edges have the same length;
  \item \emph{face isometric} if corresponding faces are directly congruent;
  \item \emph{star isometric} if corresponding vertex-stars (the geometric figures consisting of all edges emanating from a fixed vertex) are directly congruent.
  \end{itemize}
  Note that face/star isometric nets are isometric as well.
\end{definition}

\begin{definition}\label{def:IID}
  A discrete surface~$f\colon \mathbb{Z}^2 \to \mathbb{R}^3$ is \emph{infinitesimally flexible (shaky)} if it admits a nontrivial velocity field~$q\colon \mathbb{Z}^2 \to \mathbb{R}^3$ such that corresponding edges are orthogonal, that is, for any edge~$(\vvi,\vvj)$, we have
  \begin{equation*}
    \langle f_\vvi - f_\vvj, q_\vvi - q_\vvj \rangle = 0.
  \end{equation*}
  In this case we call $q$ an \emph{infinitesimal isometric deformation} (an \emph{IID} for short) and $(f,q)$ an \emph{IID-pair.}
\end{definition}

The velocity field $q$ is called \emph{trivial} if it is obtained as velocity vector field of $f$ undergoing a rigid body motion. We also note that the velocity field $q$ is never unique. If $(f,q)$ is an IID-pair, then so are $(f, tq)$ and $(f,q+v)$ for any constant $t \in \mathbb{R}\setminus\{0\}$ and $v \in \mathbb{R}^3$. The same is true for $(f,q+r)$ where $r$ is a trivial IID of $f$.

An example of an infinitesimally flexible discrete surface~$f$ with IID~$q$ is depicted in the top right of Figure~\ref{fig:enneper}. The net~$f$ is the discrete Enneper surface from~\cite{Pinkall1996}, a discrete counterpart of the classical Enneper minimal surface. We will explain its construction later in Section~\ref{sec:koenigs-nets}.

Definition~\ref{def:IID} captures the intuitive notion that the lengths of edges of the discrete surface~$f$ in the deformation
\begin{equation}
  \label{eq:deformed-net}
  t \in \mathbb{R} \mapsto f + tq
\end{equation}
are preserved in first order: For any edge $(\vvi,\vvj)$ denote by $L_{\vvi,\vvj}(t)$ the squared edge length of the deformed net \eqref{eq:deformed-net}. We find
\begin{equation*}
  \begin{aligned}
    L_{\vvi,\vvj}(t) &= \langle(f_\vvi+tq_\vvi)-(f_\vvj+tq_\vvj), (f_\vvi+tq_\vvi)-(f_\vvj+tq_\vvj)\rangle \\
              &= \langle f_\vvi-f_\vvj, f_\vvi-f_\vvj \rangle + 2t \langle f_\vvi-f_\vvj, q_\vvi-q_\vvj \rangle + t^2 \langle q_\vvi-q_\vvj, q_\vvi-q_\vvj \rangle
  \end{aligned}
\end{equation*}
and indeed obtain $\od{L_{\vvi,\vvj}}{t}(0) = 0$ if and only if $\langle f_\vvi-f_\vvj, q_\vvi-q_\vvj \rangle = 0$.

While the edge lengths in the deformation~\eqref{eq:deformed-net} change, rather surprisingly, for any choice of $t \in \mathbb{R} \setminus \{0\}$ the two discrete surfaces
\begin{equation*}
  f^{\pm t} \coloneqq f \pm t q
\end{equation*}
have equal edge lengths and are therefore isometric. This method of generating pairs of isometric discrete surfaces from an infinitesimally flexible discrete surface is called \emph{Whiteley de-averaging}~\cite{Whiteley2004}.

Equality of corresponding edge lengths of $f^{+t}$ and $f^{-t}$ follows directly from the following computation:
\begin{equation*}
  \begin{aligned}
    \langle f^{+t}_\vvi-f^{+t}_\vvj, f^{+t}_\vvi-f^{+t}_\vvj \rangle &= \langle f_\vvi-f_\vvj, f_\vvi-f_\vvj \rangle + t^2 \langle q_\vvi-q_\vvj, q_\vvi-q_\vvj \rangle \\
    & = \langle  f^{-t}_\vvi-f^{-t}_\vvj, f^{-t}_\vvi-f^{-t}_\vvj \rangle.
  \end{aligned}
\end{equation*}
The converse construction produces infinitesimally flexible structures from isometric ones and is called \emph{Whiteley averaging} \cite{Whiteley2004}: Given an isometric pair $f^{+t}$ and $f^{-t}$, a shaky quad net can be recovered as $f = \frac{1}{2}(f^{+t}+f^{-t})$. Its IID is $q \coloneqq \frac{1}{2}(f^{+t}-f^{-t})$.

In general, the isometric pair $f^{+t}$, $f^{-t}$ is neither face isometric nor star isometric. However, there exist specific IID-pairs $(f,q)$ giving rise to face isometric or star isometric $f^+$, $f^-$~\cite{sauer33}. In this case the deformation $t \mapsto f + tq$ is called \emph{face rigid} or \emph{star rigid,} respectively.

\begin{figure}
  \centering
  \includegraphics[]{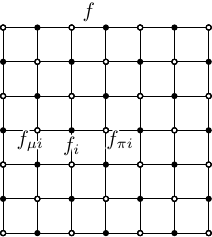}\quad
  \includegraphics[]{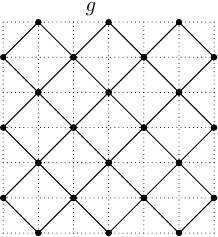}\quad
  \includegraphics[]{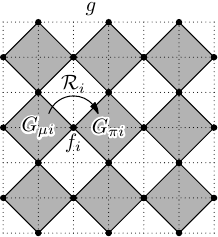}
  \caption{By introducing a black-and-white coloring on vertices and switching to the diagonal net, we turn the vertex isometric quad net $f$ (\emph{left}) into the face isometric quad net $g$ (\emph{center}). A checkerboard coloring of $g$ gives rise to a discrete rolling motion and hence an $\SQ$-net in the Study quadric. Note that we omit the superscript ``$\pm t$'' in all labels because the drawings refer to both nets.}
  \label{fig:graphs}
\end{figure}

In principle, we are interested in the face rigid case. However, the diagonal nets of star isometric nets are face rigid:
  \begin{itemize}
\item Color the vertices of $f^{\pm t}$ in black and white according to their parity, c.f. Figure~\ref{fig:graphs}, left. (The parity of the vertex $f_{m,n}$ is even if $m+n$ is even and it is odd if $m+n$ is odd.)
\item Construct new discrete surfaces~$g^{\pm t}$ by connecting any two neighboring (black) vertices of a vertex-star with white center by an edge (Figure~\ref{fig:graphs}, center).
\item Color the faces of~$g^{\pm t}$ according to a checkerboard pattern (Figure~\ref{fig:graphs}, right).
  \end{itemize}
  Thus, we can still use star rigid nets for our purposes after a change of combinatorics. This is helpful as the theory of star rigid deformations is better developed and contains interesting surface classes such as discrete minimal surfaces or CMC surfaces~\cite{bobenko08, Pinkall1996, bobenko08b, pinkallcmc}.

\begin{figure}
  \centering
  \includegraphics{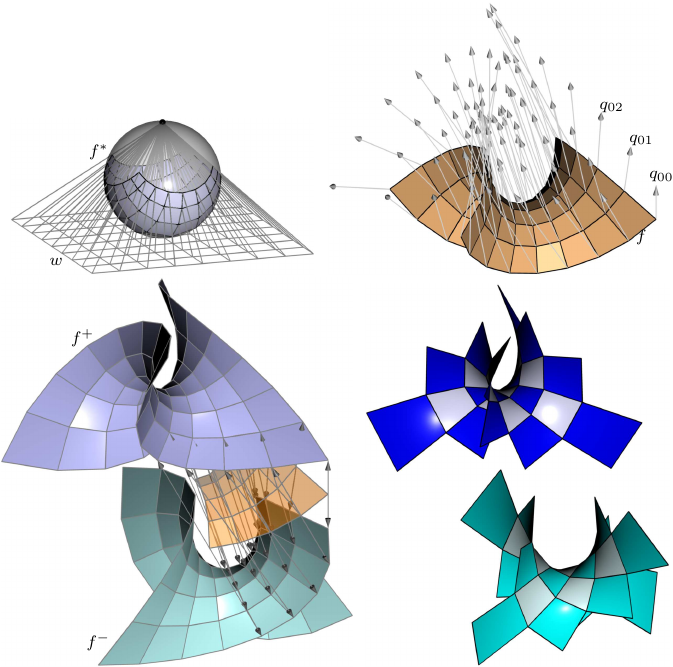}
  \caption{\emph{Top left}: The Koenigs dual $f^*$ is obtained via inverse stereographic projection. \emph{Top right}: The Koenigs net $f$ is infinitesimally flexible with IID $q$. \emph{Bottom left}: The discrete quad nets $f^+$ and $f^-$ are obtained by adding and subtracting $f$ and $q$. A portion of $f$ is depicted as well. \emph{Bottom right}: The diagonal nets of $f^+$ and $f^-$ have congruent (non-planar) faces. The checkerboard patterns define a discrete rolling motion and hence a snapping quad net of snapping four-bars.}
  \label{fig:enneper}
\end{figure}

Figure~\ref{fig:enneper} displays the nets $f^+ \coloneqq f^{+t}\mid_{t=1}$, $f^- \coloneqq f^{-t}\mid_{t=-1}$ and $f$ in the bottom left corner. They are obtained by adding and subtracting the IID~$q$ of the discrete Enneper surface $f$. Corresponding edges have indeed equal lengths. The deformation induced by the IID~$q$ is vertex rigid but not face rigid. Therefore, corresponding faces of $f^+$ and $f^-$ are not directly congruent but corresponding vertex stars are. Hence, their respective diagonal nets, displayed in Figure~\ref{fig:enneper}, bottom right, are face isometric.

The advantage of obtaining isometric discrete surface pairs for discrete rolling and subsequent building of snapping four-bar nets via Whiteley de-averaging is threefold:
\begin{itemize}
\item Infinitesimally flexible quad nets in $\mathbb{R}^3$ are well-studied in literature, see for example~\cite{bobenko08, sauer33, sauer53, schief08}.
\item Whiteley de-averaging is a straightforward and simple procedure. It depends on a real parameter $t$ that allows to control the ``distance'' of $f^+$ and $f^-$ to the original shaky quad net $f$ and also between each other.
\item For a special case (discrete Koenigs nets) we can provide simple formulas for snap angles and also axis directions, c.f. Section~\ref{sec:koenigs-nets} and in particular Equations~\eqref{eq:revolute-angle} and \eqref{eq:axis-direction}, that are directly related to the geometry of the discrete surfaces involved.
\end{itemize}

\subsection{Discrete Koenigs Nets}
\label{sec:koenigs-nets}

To complete our pipeline for constructing snapping four-bar nets, we describe the class of discrete surfaces that admit a star-rigid infinitesimal isometric deformation. This class includes the discrete Enneper surface of Figure~\ref{fig:enneper}, as well as various other well-studied surface classes, such as discrete surfaces with constant mean curvature. Moreover, it provides good control over the snap angles.

\begin{definition}[\cite{bobenko08,sauer33}]\label{def:koenigs}
  A discrete quad surface~$f\colon \mathbb{Z}^2 \to \mathbb{R}^3$ with planar faces is called a \emph{Koenigs net} if there exists a discrete surface~$f^*$ (the \emph{Koenigs dual} of $f$) such that
\begin{itemize}
\item[(K1)] $f_\vvi-f_\vvj \parallel f^*_\vvi-f^*_\vvj$ for every edge $(\vvi,\vvj)$
\item[(K2)] $f_\vvi-f_\vvk \parallel f^*_\vvj-f^*_\vvl$ for any two diagonals $(\vvi,\vvk)$ and $(\vvj,\vvl)$ of a face.
\end{itemize}
\end{definition}
In other words, the quad net $f$ is Koenigs if corresponding faces of $f$ and $f^*$ have corresponding parallel edges and parallel non-corresponding diagonals.

\begin{remark}
  \label{rem:koenigs-construction}
  We remark that, due to the symmetries in (K1) and (K2), the Koenigs dual is again a Koenigs net. In particular, given a Koenigs net $f^\star$, the net $f$ can be uniquely reconstructed from two initial values $f_\vvi$ and $f_\vvj$ (such that $f_\vvi-f_\vvj \parallel f^*_\vvi-f^*_\vvj$) on one edge $(\vvi,\vvj)$ because
  \begin{itemize}
  \item all edge directions and diagonal directions are known and
  \item we can iteratively construct $f$ by intersecting straight lines in space.
  \end{itemize}
  In doing so, the different ways of computing one particular vertex all yield the same result so that one does not run into contradictions \cite[Section~2.3]{bobenko08}.
\end{remark}

\begin{proposition}
  \label{prop:koenigs_inf}
  Any Koenigs net~$f$ with Koenigs dual~$f^*$ is infinitesimally flexible. A velocity vector field $q$ is uniquely determined by one initial value and the formula
  \begin{equation}
    \label{eq:koenigs_formula}
    q_\vvj - q_\vvi = f_\vvi^* \times (f_\vvj-f_\vvi)= f_\vvj^* \times (f_\vvj - f_\vvi)
  \end{equation}
  for any edge~$(\vvi,\vvj)$.
\end{proposition}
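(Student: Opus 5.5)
Here is the plan. We define $q$ by the increments in \eqref{eq:koenigs_formula}, show that this definition is consistent, and then check the IID condition of Definition~\ref{def:IID}; the last step is immediate.

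\emph{The two formulas agree.} By (K1), $f^*_\vvj - f^*_\vvi$ is parallel to $f_\vvj - f_\vvi$. Hence
\[
  (f^*_\vvj - f^*_\vvi)\times(f_\vvj - f_\vvi) = 0,
\]
so $f^*_\vvi \times (f_\vvj - f_\vvi) = f^*_\vvj \times (f_\vvj - f_\vvi)$. Write $\delta q_{\vvi\vvj}$ for this common value. It is antisymmetric under swapping $\vvi$ and $\vvj$, so it is a well-defined discrete $1$-form on the edges of $\mathbb{Z}^2$.

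\emph{The form is closed.} Since $\mathbb{Z}^2$ is simply connected, $\delta q$ integrates to a function $q$, unique once one value is fixed, provided that the sum of $\delta q$ around every elementary face vanishes. This is the main step. Take a face $(\vvi,\vvj,\vvk,\vvl)$ and choose a convenient representative for each edge:
\begin{itemize}
\item edge $\vvi\vvj$: $f^*_\vvi\times(f_\vvj-f_\vvi)$,
\item edge $\vvj\vvk$: $f^*_\vvk\times(f_\vvk-f_\vvj)$,
\item edge $\vvk\vvl$: $f^*_\vvk\times(f_\vvl-f_\vvk)$,
\item edge $\vvl\vvi$: $f^*_\vvi\times(f_\vvi-f_\vvl)$.
\end{itemize}
Grouping the terms by $f^*_\vvi$ and $f^*_\vvk$, the sum is
\[
  f^*_\vvi\times(f_\vvj-f_\vvl) + f^*_\vvk\times(f_\vvl-f_\vvj) = (f^*_\vvi - f^*_\vvk)\times(f_\vvj - f_\vvl).
\]
This vanishes by (K2), applied to the dual (Remark~\ref{rem:koenigs-construction}): the diagonal $f^*_\vvi f^*_\vvk$ is parallel to the non-corresponding diagonal $f_\vvj f_\vvl$. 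I expect the only real care to lie here. We must pick the representatives so that the terms pair up into diagonals, and we must check that (K2) is used in the correct direction, namely with the roles of $f$ and $f^*$ swapped, which the symmetry of (K2) allows.

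\emph{Infinitesimal isometry.} With $q$ so defined, for every edge
\[
  \langle f_\vvj - f_\vvi,\, q_\vvj - q_\vvi\rangle = \langle f_\vvj - f_\vvi,\, f^*_\vvi\times(f_\vvj - f_\vvi)\rangle = 0,
\]
because a cross product is orthogonal to each of its factors. So $q$ is an IID and $f$ is infinitesimally flexible. Uniqueness up to the initial value holds because all increments of $q$ are prescribed.

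One more point: $q$ should be nontrivial. A trivial field has increments $\omega\times(f_\vvj-f_\vvi)$ with a constant $\omega$. Comparing this with the increments of $q$ forces $f^*_\vvi - \omega$ to be parallel to every edge at $\vvi$, and such parallelism can only hold in degenerate cases, for example when $f^*$ is constant. Following the paper's conventions, I would note this briefly rather than belabor it.
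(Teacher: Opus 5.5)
Your proof is correct and follows the paper's argument: (K1) makes the two expressions in \eqref{eq:koenigs_formula} agree, closure around each face reduces to a cross product of a dual diagonal with the non-corresponding diagonal of $f$ that vanishes by (K2), and the IID condition follows from orthogonality of the cross product. The differences are cosmetic: you take representatives at $i$ and $k$ and obtain $(f^*_i - f^*_k)\times(f_j - f_l)$, which uses the swapped half of (K2), whereas the paper uses $f^*_j$ and $f^*_l$ and obtains $(f^*_j - f^*_l)\times(f_k - f_i)$ directly from (K2) as stated; your remark on nontriviality is an addition the paper does not make.
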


Proposition~\ref{prop:koenigs_inf} is due to \cite{sauer33}. We provide a proof in English language and modern notation.

\begin{proof}[Proof of Proposition~\ref{prop:koenigs_inf}]
  We first obverse that, due to (K1), the right equality in \eqref{eq:koenigs_formula} indeed holds:
  \begin{equation*}
    f_\vvj^* \times (f_\vvj-f_\vvi) - f_\vvi^* \times (f_\vvj - f_\vvi) = (f_\vvj^* - f_\vvi^*) \times (f_\vvj-f_\vvi) = 0.
  \end{equation*}
  The vector field $q$ as described above is unique because \eqref{eq:koenigs_formula} allows to construct $q_\vvj$ from given $q_\vvi$ for any edge $(\vvi,\vvj)$. In order to show existence, we need to prove that for any face $(\vvi,\vvj,\vvk,\vvl)$ with $q_\vvi$ given, the construction of $q_\vvk$ via $q_\vvj$ and of $q'_\vvk$ via $q_\vvl$ gives the same vertex $q_\vvk = q'_\vvk$. Indeed,
  \begin{align*}
    q_\vvk &= q_\vvj + f_\vvj^* \times(f_\vvk - f_\vvj) = q_\vvi + f_\vvj^* \times(f_\vvj - f_\vvi) + f_\vvj^* \times (f_\vvk - f_\vvj), \\
    q'_\vvk &= q_\vvl + f_\vvl^* \times(f_\vvk - f_\vvl) = q_\vvi + f_\vvl^* \times(f_\vvl - f_\vvi) + f_\vvl^* \times (f_\vvk - f_\vvl).
  \end{align*}
  By (K2), it directly follows that
  \begin{multline*}
    q_\vvk - q'_\vvk =
    f_\vvj^* \times(f_\vvj - f_\vvi) + f_\vvj^* \times (f_\vvk - f_\vvj) -
    f_\vvl^* \times(f_\vvl - f_\vvi) - f_\vvl^* \times (f_\vvk - f_\vvl) \\=
    f_\vvj^* \times (f_\vvk - f_\vvi) - f_\vvl^* \times (f_\vvk - f_\vvi) =
    (f_\vvj^* - f_\vvl^*) \times (f_\vvk - f_\vvi) = 0
  \end{multline*}
  and the vector field~$q$ is indeed well-defined.

  Finally, taking into account (K1), we conclude that $q$ is a velocity field that defines an infinitesimally isometric deformation for $f$:
  \begin{equation*}
    \langle f_\vvj - f_\vvi, q_\vvj - q_\vvi \rangle =
    \langle f_\vvj - f_\vvi, f_\vvi^* \times (f_\vvj - f_\vvi) \rangle =
    \langle f_\vvi^*,  (f_\vvj - f_\vvi)\times (f_\vvj - f_\vvi) \rangle =
    0.\qedhere
  \end{equation*}
\end{proof}

The simplest non-trivial example of a minimal Koenigs net is certainly the discrete analog of Enneper's minimal surface of \cite{Pinkall1996}. Its Koenigs dual is given by its discrete Gauss map; hence, its vertices lie on a sphere.

\begin{example}[Discrete Enneper Minimal Surface \cite{Pinkall1996}]
  \label{ex:enneper_surface}
  Let
  \begin{equation*}
    w\colon \mathbb{Z}^2 \to \mathbb{R}^3,\quad
    (m,n) \to (m,n,0)
  \end{equation*}
  be the regular $\mathbb{Z}^2$-grid, embedded into $\mathbb{R}^3$ as plane with equation $z = 0$, and denote the inverse stereographic projection of~$w$ from the center $(0,0,4)$ onto the sphere with center $(0,0,2)$ and radius $2$ by~$f^*$. We have
  \[
    f^*_{m,n} = \frac{1}{m^2+n^2+16}(16m, 16n, 4m^2+4n^2).
  \]
  We can compute the dual surface~$f$, the discrete Enneper minimal surface of Figure~\ref{fig:enneper}, top right, as in Remark~\ref{rem:koenigs-construction}. A velocity field~$q$ is determined by \eqref{eq:koenigs_formula}. For some initial value $q_{00}$, we obtain for example
  \[
    q_{01} = q_{00} + \frac{1}{123}(16,112,64),\quad
    q_{02} = q_{00} + \frac{1}{123}(56,64,128),\quad\text{etc.}
  \]
  In order to produce the images in Figure~\ref{fig:enneper} we used $q_{00} = (0,0,1)$. This choice is inconsequential as it only effects a constant translation of the discrete surfaces $f^+$ and~$f^-$.

A discrete Enneper surface was also used to generate the data of the physical prototype of a snapping four-bar net shown in Figure~\ref{fig:prints} (see Subsection~\ref{sec:3d-print} for more details).
\end{example}

\subsection{Revolute Axes and Snap Angles From Koenigs Nets}
\label{sec:axis-angles}

Using Koenigs nets to initiate our construction has one big advantage. \emph{It allows for a simple formula for the snap angles.} To describe it, we proceed by introducing some notation. Let
\begin{equation*}
  \crossM{v} \coloneqq
  \begin{pmatrix} 0 & -v_3 & v_2 \\ v_3 & 0 & -v_1\\ -v_2 & v_1 & 0 \end{pmatrix}
\end{equation*}
be the skew-symmetric matrix associated to the vector $v=(v_1,v_2,v_3) \in \mathbb{R}^3$ such that $\crossM{v}x=v \times x$ for any $x \in \mathbb{R}^3$. Moreover, we denote the \emph{Cayley transform} of the vector~$v$ by
\begin{equation*}
    \cay{v}\coloneqq(I-\crossM{v})(I+\crossM{v})^{-1},
\end{equation*}
where $I$ denotes the identity matrix, c.f. \cite[Section~B.4]{lynch2017}. The matrix~$\cay{v}$ is then an orthogonal matrix which defines a proper rotation in~$\mathbb{R}^3$. Note that the inverse of a Cayley transform is itself a Cayley transform, given by the formula $C(v)^{-1} = C(-v)$.

\begin{figure}
  \centering
  \includegraphics{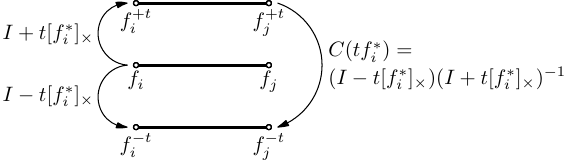}
  \caption{Illustration for the derivation of Equation~\eqref{eq:edgeRelation}.}
  \label{fig:cayley}
\end{figure}

We have the following relation between an edge of a deformed net~$f^{+t}\coloneqq
f+tq$ and an edge $(\vvi,\vvj)$ of the shaky net~$f$:
\begin{align*}
    (f^{+t}_\vvj - f^{+t}_\vvi) &= (f_\vvj - f_\vvi) + t (q_\vvj - q_\vvi) \\
    &= (f_\vvj - f_\vvi) + t \big( f_\vvi^\star \times (f_\vvj - f_\vvi) \big)\\
    &= (I + t\crossM{f^*_\vvi})(f_\vvj - f_\vvi).
\end{align*}
This is illustrated in Figure~\ref{fig:cayley}. Consequently, we obtain that corresponding edges of the discrete surfaces $f^{+t}$ and $f^{-t}$ are related by Cayley transforms which are given explicitly by $f^*$ and the deformation parameter~$t$,
\begin{equation}
  \begin{aligned}
    \label{eq:edgeRelation}
    ( f^{-t}_\vvj - f^{-t}_\vvi ) &= (I-t\crossM{f^*_\vvi})(I+t\crossM{f^*_\vvi})^{-1} (f^{+t}_\vvj - f^{+t}_\vvi) \\
    &= C(t f_\vvi^\star) (f^{+t}_\vvj - f^{+t}_\vvi)
  \end{aligned}
\end{equation}
(c.f. Figure~\ref{fig:cayley} as well). We observe that Equation~\eqref{eq:edgeRelation} holds for any neighboring point~$f_\vvj^{\pm t}$ of $f_\vvi^{\pm t}$. Thus, the rotation $ C(t f_\vvi^\star)$ is the same for any edge of the vertex star of~$f_\vvi^{+t}$ and we conclude that corresponding vertex stars of $f^{+t}$ and $f^{-t}$ are directly congruent.

Therefore, we need to pass on to the diagonal nets $g^{_\pm t}$ as described in Section~\ref{sec:rolling-nets} and illustrated in Figure~\ref{fig:graphs}. We denote the faces of~$g^{\pm t}$ by $G^{\pm t}$, where $G_{\vvi}^{\pm t}$ denotes the pair of faces obtained from the vertex stars at~$f_{\vvi}^{\pm t}$. From this new surface pair~$g^{\pm t}$, we then obtain a rotation net via rolling $g^{+t}$ onto~$g^{-t}$ along black faces as described in Subsection~\ref{sec:rolling-nets}.

We are now going to explicitly describe the relative rotations between adjacent black faces whose indices differ in one coordinate direction, be it the first or the second. Denote by $\pi\colon \mathbb{Z}^2 \to \mathbb{Z}^2$ the operator that increments the index by one in the relevant coordinate direction and set $\mu\coloneqq\pi^{-1}$ (the decrement operator). In view of
\begin{equation*}
  ( f_{\pi\vvi}^{-t} - f^{-t}_{\vvi} ) =  C(t f_{\pi\vvi}^\star)  C(t f_{\mu\vvi}^\star)^{-1} ( C(t f_{\mu\vvi}^\star)(f_{\pi\vvi}^{+t} - f^{+t}_{\vvi}) ),
\end{equation*}
we define
\begin{equation*}
  \mathcal{R}^t_\vvi \coloneqq  C(t f_{\pi\vvi}^\star)  C(t f_{\mu\vvi}^\star)^{-1} = C(t f_{\pi\vvi}^\star)  C(-t f_{\mu\vvi}^\star).
\end{equation*}
The matrix~$\mathcal{R}_\vvi^t$ is then a rotation matrix. It describes the relative rotation between the two poses of~$g^{+t}$ where the face pairs $G^{-t}_{\mu\vvi}$, $G^{+t}_{\mu\vvi}$ and $G^{-t}_{\pi\vvi}$, $G^{+t}_{\pi\vvi}$, respectively, are aligned (Figure~\ref{fig:graphs}, right). More precisely, the matrices~$\mathcal{R}_\vvi^t$ describe the discrete rolling motion when rolling $g^{+t}$ onto $g^{-t}$ along black faces. Thus, this provides a way to describe rotation nets of snapping four-bar nets in terms of matrices instead of using dual quaternions, as in Subsection~\ref{sec:snapping-nets}.

The rotation angle~$\alpha^t$ of~$\mathcal{R}_i^t$ is explicitly given by
\begin{equation}
  \label{eq:revolute-angle}
  \cos \frac{\alpha^t_\vvi}{2} = \frac{1 + t^2 \langle f_{\mu\vvi}^\star, f_{\pi\vvi}^\star\rangle}{\sqrt{(1+t^2\Vert f_{\mu\vvi}^\star\Vert^2)(1+t^2 \Vert f_{\pi\vvi}^\star \Vert^2)}}.
\end{equation}
It is the angle change between the two snapping configurations (the ``snap angle''). Moreover, the direction of the rotation axis in the fixed frame is
\begin{equation}
  \label{eq:axis-direction}
  u_\vvi = f_{\pi\vvi}^\star - f_{\mu\vvi}^\star + t f_{\pi\vvi}^\star \times f_{\mu\vvi}^\star.
\end{equation}
The axes themselves are incident with the vertices $f_\vvi^{-t}$.

\begin{remark}
  \begin{itemize}
  \item[(i)] It is remarkable that the snap angles \eqref{eq:revolute-angle} and the axis directions~\eqref{eq:axis-direction} depend only on the Koenigs dual~$f^*$ and are pointwise determined by only two values, $f^*_{\mu\vvi}$ and $f^*_{\pi\vvi}$. Thus, for our example of the discrete Enneper surface, these quantities are given in terms of the discrete Gauss map of the discrete minimal surface. As expected, the snap angles are small for small $t$ ($\lim_{t\to0}\alpha_\vvi = 0$) and they increase with $t$ locally around $t = 0$.
  \item[(ii)] Similar formulas can be obtained for face isometric surfaces not arising from diagonal nets of discrete Koenigs nets (see \cite{sauer37, sauer53, schief08} for the construction of face isometric surfaces). However, in this case the geometry of the discrete surfaces giving rise to the Cayley transforms is less lucid than for Koenigs duals.
  \end{itemize}
\end{remark}

\subsection{3D Printed Prototypes}
\label{sec:3d-print}

As a proof of concept, we fabricated 3D printed prototypes of a snapping four-bar net (see Figure~\ref{fig:prints}) and a ``snapping cube'', that is, the snapping structure corresponding to one cell of an $\SQ$-net of dimension $d = 3$. CAD data and videos of the snapping structures are available as supplementary material to this manuscript and via~\cite{suppl_zenodo}.

The data for the snapping four-bar net was generated from a discrete Enneper surface, as pointed out in Example~\ref{ex:enneper_surface}. To break symmetry and obtain more visually distinct shapes in the two stable configurations, we applied a Möbius transformation to the discrete Enneper surface. In the case of discrete minimal surfaces, this transformation again yields a discrete Koenigs net.

A snapping cube can be generated using methods similar to those presented in Section~\ref{sec:surface-design}. However, this construction requires controlling additional monodromy conditions during the infinitesimal isometric deformation. Further details will be provided in future work.

\begin{figure}
  \centering
  \includegraphics[scale=0.18]{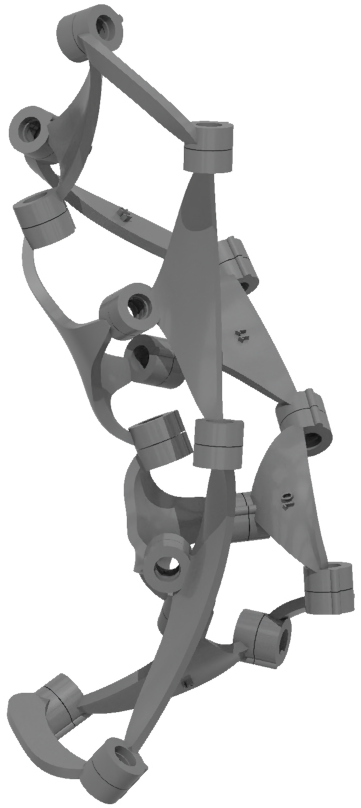}
  \includegraphics[scale=0.18]{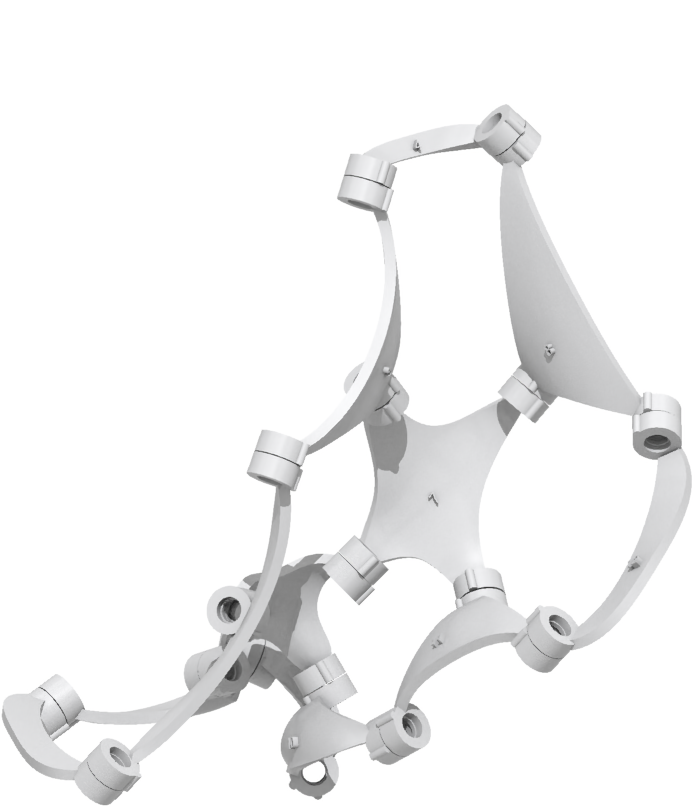}
  \qquad\includegraphics[scale=0.18]{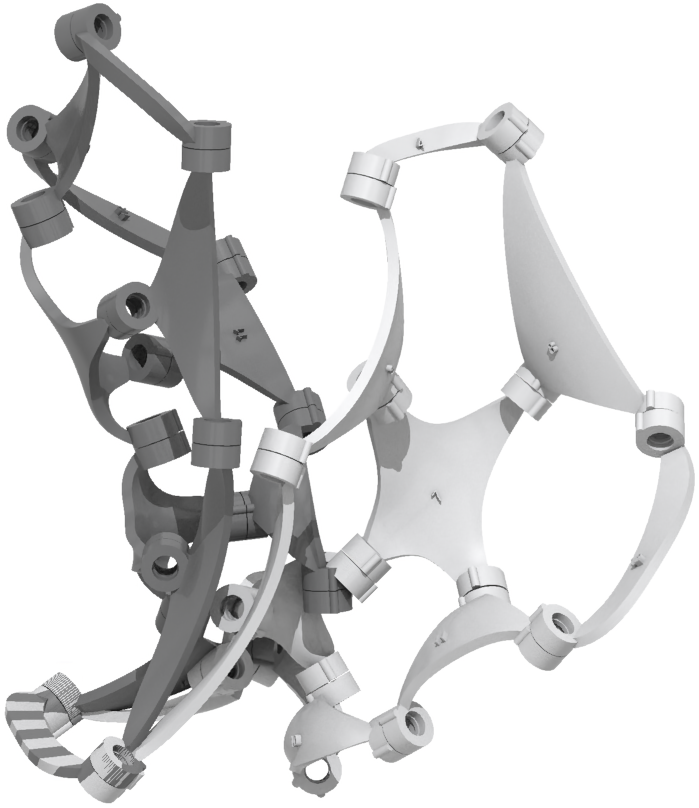}
  \caption{CAD renderings of a prototype snapping four-bar net in its two stable configurations. Numbers printed on the link parts are helpful during manual assembly.}
  \label{fig:prints}
\end{figure}

The CAD models were then created using the NURBS-modeling software Rhinoceros, which offers a wide range of options for shaping the rigid links between the joints. The parts were printed using a commercially available Bambu Lab 3D printer with standard PLA Material.

As input, each revolute joint was specified by
\begin{itemize}
\item a point, given by a vertex of one of the isometric discrete surface pair used for rolling, together with
\item the direction of the rotation axis in the corresponding stable configuration, c.f. \eqref{eq:axis-direction}.
\end{itemize}
In addition, the checkerboard combinatorics governing the connection of the revolute joints by rigid links were prescribed.

In the CAD design process, the first step was to model all joints as bearing housings~(Figure~\ref{fig:prints}). The bearings used in that model had an outer diameter of \qty{12}{\milli\metre} and a height of \qty{4}{\milli\metre}. To avoid geometric interference due to overlapping, some housings were shifted along their axes, which is always mechanically possible and does not affect the kinematic snapping behavior of the overall structure. This changes the original quadrangles of the underlying discrete surface but does not affect bistability.

In the second step, we designed the rigid links. In our initial attempt, we used pipe surfaces between adjacent joints, as shown in~\cite{huczala24}. However, the resulting snapping behavior was not as well defined as expected. We therefore adopted the more robust design presented here: spline curves connecting the joints served as edge curves for NURBS surfaces, which were then offset to assign material thickness. During assembly, screws matching the inner diameter of the bearings were inserted as axes, a washer was positioned between the two inner bearings, and the stack was fastened with a nut.

As shown in the accompanying video material, the prototypes exhibit the predicted snapping behavior.

\section{Conclusion}

We presented a mathematical treatment of snapping four-bar nets, a mechanical
structure that is bistable in an exact mathematical sense. By identifying snapping four-bar nets with discrete rolling motions and with quad nets in the Study quadric, we proved existence of arbitrarily large structures. Moreover, we suggested a construction method, based on infinitesimally flexible quad surfaces and Whiteley de-averaging, that allows to control axis positions as well as snap angles. We summarize the essential construction steps here in order to help readers who wish to construct their own models:
\begin{enumerate}
\item Pick a discrete quad net $f$ with a face or vertex rigid infinitesimal isometric deformation $q$. Many of those can be found in existing literature \cite{bobenko08, sauer33, wallner08}.
\item Apply Whiteley de-averaging to obtain a pair $f^{+t} = f + tq$, $f^{-t} = f - tq$ of isometric quad nets with congruent faces or with congruent vertex stars.
\item In case of congruent vertex stars, change the net combinatorics as illustrated in Figure~\ref{fig:graphs}.
\item Color the two surfaces in a checkerboard pattern and create a discrete rolling motion as illustrated in Figure~\ref{fig:rolling-surfaces}. The revolute axes in the fixed and in the moving frame, respectively, determine the two configurations of the snapping four-bar net.
\item In case of $f$ being a Koenigs net, use Equations~\eqref{eq:revolute-angle} to control the snap angle and \eqref{eq:axis-direction} to determine the axis directions directly.
\end{enumerate}

3D printed prototypes verify the feasibility of the thus obtained surface-like designs. Of course, actual mechanism synthesis, with clear constraints imposed on both configurations, is still a challenge.

Further open questions concern issues that go beyond the purely geometric aspects of snapping four-bar nets and include the following: Which deformations occur during the snap? Can they be modelled in such a way that collisions of links can be detected and avoided? Which force is needed to induce the snap and will the snap propagate through the net? What is the tension induced into the links during the snap? How should they be shaped and how can they be fabricated in order to ensure both, stiff target configurations and a long life-time?

Beyond these mechanical and physical aspects, additional questions of a purely geometric nature arise. For example, snapping four-bar nets obtained from face-isometric surfaces come in pairs, depending on whether the black or the white faces are used for rolling. How is the geometry of two such ``conjugated'' snapping mechanisms intertwined?

\section*{Acknowledgment}

The authors thank the anonymous reviewers for their helpful remarks and suggestions that led to a much improved revised version of this text.

\bibliographystyle{elsarticle-num}
\bibliography{references}
\end{document}